\theoremstyle{plain}
\newtheorem{thm}{Theorem}[section]
\newtheorem{theorem}[thm]{Theorem}
\newtheorem{lemma}[thm]{Lemma}
\newtheorem{proposition}[thm]{Proposition}
\theoremstyle{definition}
\newtheorem{definition}[thm]{Definition}
\newtheorem{remark}[thm]{Remark}
\newtheorem{property}[thm]{Property}
\title{Like$\mathbb{N}$s a point of view on natural numbers, II}
\author{Edward Tutaj}
\date{\today}
\begin{document}
\maketitle

\thanks{Keywords:{ Beurling numbers, distribution of prime numbers, Cauchy translation
equation, numerical semigroups, Ap$\acute{e}$ry sets}}
\begin{abstract}

 {\small In this paper we continue our research on the concept
  of {\it liken}. This notion has been defined  as a sequence
  of non-negative real numbers, tending to infinity and   closed with respect to  addition in $\mathbb{R}$.
  The most important examples of likens  are clearly the set of natural numbers $\mathbb{N}$ with addition and the set of positive
  natural numbers $\mathbb{N}^{*}$ with multiplication, represented by a sequence
  $(\ln({n+1}))_0^{\infty}$.
  The set of all likens can be parameterized   by the points
 of some infinite dimensional, complete metric space. In this {\it "space of likens"}  we consider elements up to isomorphism and define
  {\it "properties of likens"} as such, that are isomorphism invariant. The main result of
 this work is
 a theorem characterizing  the liken ${\mathbb N}^*$ of natural
 numbers with multiplication in the space of all likens.}

\end{abstract}

\section{Introduction}\label{intro}

We will begin by recalling the content of the paper \cite{Tut}, which is necessary to formulate and prove the main result of this paper, i.e. Theorem
\ref{main}. As it was mentioned in \cite{Tut}, the notion of a liken may be considered as some way of talking about of the so-called {\it Beurling numbers}
\cite{Beu}. The family of all likens - say $\mathcal{H}$ - (we will say also {\it the space of likens}) described in \cite{Tut}, constitutes a kind of a {\it
natural environment} where "live" the two fundamental mathematical structures: $(\mathbb{N},+)$ (the natural numbers with addition) and
$(\mathbb{N}^{*},\cdot)$, (the natural numbers with multiplication) which - as mathematical structures - are ordered semigroups. Let us pay attention here,
that by a liken we mean a sub-semigroup of the additive semigroup $\mathbb{R}^{+}$, so we replace $\mathbb{N}^{*}$ by  the sequence $(\ln({n+1}))_0^{\infty}$,
however without changing the notation,  and we call elements of this last sequence also "the natural numbers". Although $\mathcal{H}$ is a rather big space
(infinite dimensional complete metric space), the most of likens seem to be of little interest and if they were brought to life in \cite{Tut}, it was only to
look at the liken $\mathbb N^{*}$ from a slightly different point of view.

 The exact definition of a liken (in different versions)  will be recalled below,
 and at the beginning it is enough to know, that a liken $\mathbb L$ is a strictly
increasing sequence $\mathbb L = (x_n)_0^\infty$ of real numbers, which is a sub-semigroup of the semigroup $\mathbb R^{+}$. Hence in each liken $\mathbb L$ we
have two types of mathematical structures inherited from $\mathbb R^{+}$, i.e. the algebraic structure of the sub-semigroup with addition and the structure of
the ordered space with respect to the inequality in $\mathbb R$. This make  possible to  define the isomorphism of likens as a bijection which preserves both
structures - algebraic and ordinal.

  Different details concerning the relation of the isomorphism of likens will be discussed
  in the next section.  It appears - and this is in a sense a typical situation -
that all "interesting" likens (infinitely generated and with uniqueness) are algebraically isomorphic to each other, and at the same time, they are always
isomorphic as ordered spaces to each other.  On the other hand, as it was proved in \cite{Tut}, they are isomorphic as likens if and only if their sets of
generators are homothetic. As it was mentioned above, this situation is "typical".  To  understand better the meaning of the term "typical", let us consider
the example of the family of all infinite dimensional, separable Banach spaces. Each two such spaces are isomorphic as vector spaces since they have the
(vector) bases of the same cardinality, and each two such spaces are homeomorphic as topological spaces by the theorem of Kadec-Anderson, but two such spaces
are isomorphic as Banach spaces only when there exists a linear isomorphism which is also the topological homeomorphism.
 It seems that the basic advantage of using the abstract language of likens lies
in
 the fact that we can formulate different properties of likens in this
 language and consequently distinguish between them. Roughly speaking, {\it a property of
 liken}
 is each property, which is preserved by isomorphisms of likens.
 Perhaps the most important of such properties of likens is that they are generated
 by their irreducible elements (just like natural numbers by prime numbers
 in the semigroup $\mathbb N^{*}$), but this property is common for all likens. We will provide
 non-trivial examples of a few such properties later in the paper, but for now
 let us note, for (a trivial) example, that property: {\it the element  $x_2$ is undecomposable} is
 fulfilled in $\mathbb N^{*}$ but is not true in $\mathbb N$.

 The main result of this paper is  Theorem \ref{main} which gives a
 characterization of the liken $\mathbb N^{*}$ among all likens.
 For this, first we will formulate two, among others,  properties of likens called {\it
 convexity}, denoted by (C), and {\it Ockham's  razor property} denoted
 by (OR). Then the main theorem states: {\it if a liken $\mathbb L$ has the
 properties (C) and (OR) then it is isomorphic to $\mathbb
 N^{*}$.}

  The paper is organized as follows. In  Section \ref{sekcja1} we recall
  some definitions, notations and theorems proved in \cite{Tut},
   which will be used in this paper. In fact the contents of
  Section \ref{sekcja1} is to be found in \cite{Tut}, but because of some small
  differences in notations it will be better to collect in  Section \ref{sekcja1} all we will need about likens  in
  this paper. In  Section \ref{sekcja2} we  formulate a number of
  general properties of likens, in particular the mentioned
  properties (C) and (OR). In   Section \ref{sekcja3} we  present the
  proof of  Theorem \ref{main}. In the last section we  formulate a
  number of remarks.

\section{Definitions, notations and the main results about
likens}\label{sekcja1}

In this paper, as in \cite{Tut}, we will use the following notations:
\begin{equation}\label{oznaczenieR+}
{\mathbb{R}}^{+}=[0,\infty),
\end{equation}
\begin{equation}\label{oznaczenieQ+}
{\mathbb{Q}}^{+}=[0,\infty)\cap \mathbb{Q},
\end{equation}
\begin{equation}\label{duzaprzestrzen}
{\mathbb{R}}^{\mathbb{N}}= \left\{\overrightarrow{a}=(a_i)_1^\infty: a_i\in \mathbb{R}\right\}.
\end{equation}
\begin{equation}\label{duzystozek}
({{\mathbb{R}}^{+}})^{\mathbb{N}}= \left\{\overrightarrow{a}\in {\mathbb{R}}^{\mathbb{N}}: a_i\geq 0\right\}.
\end{equation}
\begin{equation}\label{malaprzestrzen}
{\mathbb{R}}^{\mathbb{N}}_{0}=\left\{\overrightarrow{a}\in {\mathbb{R}}^{\mathbb{N}}:\exists {j}:i>j\Rightarrow a_i=0\right\}.
\end{equation}
\begin{equation}\label{przestrzenwymierna}
{\mathbb{Q}}^{\mathbb{N}}= \left\{\overrightarrow{a}=(a_i)_1^\infty: a_i\in \mathbb{Q}\right\}.
\end{equation}
\begin{equation}\label{stozekwymierny}
({{\mathbb{Q}}^{+}})^{\mathbb{N}}= \left\{\overrightarrow{a}\in {\mathbb{Q}}^{\mathbb{N}}: a_i\geq 0\right\}.
\end{equation}
\begin{equation}\label{malawymierna}
{\mathbb{Q}}^{\mathbb{N}}_{0}=\left\{\overrightarrow{m}\in {\mathbb{Q}}^{\mathbb{N}}:\exists {j}:i>j\Rightarrow a_i=0\right\}.
\end{equation}

\begin{equation}\label{przestrzennaturalna}
{\mathbb{N}}^{\mathbb{N}}_{0}=\left\{\overrightarrow{a}\in {\mathbb{N}}^{\mathbb{N}}:\exists {j}:i>j\Rightarrow a_i=0\right\}.
\end{equation}

 Moreover, for
$\overrightarrow{a}\in {\mathbb{R}}^{\mathbb{N}}$ , and for $\overrightarrow{m}\in {\mathbb{N}}^{\mathbb{N}}_{0}$ we set:

\begin{equation}\label{iloczynskalarny}
\langle \overrightarrow{a}, \overrightarrow{m}\rangle = m_1
 a_1+m_2 a_2+...
\end{equation}

Let us note, that although $\overrightarrow{a}$ may tend to infinity, the righthand side sum is always finite, since the sequence $\overrightarrow{m}$ in fact
is finite.

The definition of the liken given in \cite{Tut} is following

\vspace{2mm}

\begin{definition}\label{definicjalikena}

A liken $\mathbb{L}$ is a sequence $(x_n)_{0}^{\infty}$ of real numbers such that:

a) For all $n\in \mathbb{N}$ we have $0=x_{0}\leq x_n<x_{n+1}$,

b) For all $n\in \mathbb{N}\ni m$ there is $k\in \mathbb{N}$ such that $x_n + x_m = x_k$.

\end{definition}

As it was observed in \cite{Tut}, a liken  $ {\mathbb L}$ is an increasing  sequence of nonnegative real numbers, which is closed with respect to the addition
and tends to infinity.

\vspace{2mm}

Now we recall the notion of the isomorphism of likens.

\begin{definition}\label{definicjahomomorfizmu}
 Let $(\mathbb{G},+)$ be a semigroup and let $\mathbb{L}$ be a
liken. We will say that a map $\varphi:\mathbb{G}\longrightarrow \mathbb{L}$ is

  a)  an algebraic homomorphism, when
$\varphi(x+y)=\varphi(x)+\varphi(y)$,

  b) an algebraic monomorphism, when it is an injective
homomorphism,

  c)   an algebraic isomorphism, when it is a surjective
monomorphism.

\end{definition}
\vspace{2mm}

In particular we know now, what  it means that two likens $\mathbb{L}$ and $\mathbb{K}$ are algebraically isomorphic. It is also clear, that each two likens
are isomorphic as ordered spaces, since they are similar to the ordered space $(\mathbb{N},\leq)$.
 Let us mention, that the map $\mathbb K \ni x_n \rightarrow y_n\in \mathbb L$
 is not (in general) a homomorphism of likens, and let us mention also, that if
$\varphi:\mathbb{K}\longrightarrow \mathbb{L}$ is an ordinal
 isomorphism, then it is unique.
 Finally we set

\vspace{3mm}
\begin{definition}\label{definicjaizomorfizmu}

 Two likens $\mathbb{L}$ and $\mathbb{K}$ are isomorphic if the
(unique) ordinal isomorphism is also an algebraic homomorphism.

\end{definition}

A very important consequence of the axioms of liken is the existence of {\it undecomposable elements} (called also {\it irreducible elements} or {\it prime
elements}).

\vspace{2mm}

\begin{definition}\label{definicjaelementunieroz}
Let $\mathbb{L}$ be a liken and let $u\in \mathbb{L}$. We will say, that $u$ is undecomposable if
$$u=v+w, v\in \mathbb{L}\ni w \Longrightarrow v=0\vee w=0.$$
\end{definition}
\vspace{2mm}

As it was observed in \cite{Tut}

\begin{proposition}\label{istnienieelemnierozk}
Each liken $\mathbb{L}=(x_n)_0^{\infty}$ has at least one undecomposable element.
\end{proposition}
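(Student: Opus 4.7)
The natural candidate for an undecomposable element is the smallest positive term, namely $x_1$. My plan is to show that $x_1$ must always be undecomposable, directly from the two axioms in Definition \ref{definicjalikena}.

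First I would note that by condition (a) of the definition, the sequence $(x_n)$ is strictly increasing with $x_0 = 0$, so $x_1 > 0$ and moreover $x_1$ is the minimum of the set $\mathbb{L} \setminus \{0\}$. In particular, every element $y \in \mathbb{L}$ satisfies either $y = 0$ or $y \geq x_1$.

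Next, suppose for contradiction that $x_1 = v + w$ with $v, w \in \mathbb{L}$ and both $v \neq 0$ and $w \neq 0$. By the previous observation, this forces $v \geq x_1$ and $w \geq x_1$, hence
\[
x_1 = v + w \geq 2 x_1 > x_1,
\]
which is absurd. Therefore at least one of $v, w$ equals $0$, and $x_1$ satisfies the condition of Definition \ref{definicjaelementunieroz}.

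There is no real obstacle here; the closure axiom (b) is not even needed for this particular proposition, only the order-theoretic content of (a). The argument generalizes to say that any element $x_n$ which cannot be written as a sum of two strictly smaller positive elements of $\mathbb{L}$ is undecomposable, but since $x_1$ has no strictly smaller positive element below it in $\mathbb{L}$ at all, it trivially qualifies.
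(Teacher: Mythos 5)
Your proof is correct, and it is essentially the argument the paper itself uses (the proposition is recalled from \cite{Tut}, but the same reasoning appears verbatim in the case $n=1$ of the proof of Theorem \ref{main}: if $x_1=u+v$ with $u,v>0$ then $0<u<x_1$, contradicting that $x_1$ is the least positive element). Your observation that only axiom (a) is needed is also accurate.
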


Also (see \cite{Tut})

\begin{proposition}\label{istnienierozkladu}
 Let $\mathbb{L}$ be a liken, and let ${\mathcal{P}}_{\mathbb{L}}$ be the set
of all undecomposable elements of $\mathbb{L}$. Then each element of $x\in \mathbb{L}$ can be written in the form
\begin{equation}\label{rozklad}
x=m_1\cdot a_1 + m_2\cdot a_2+...+m_k\cdot a_k,
\end{equation}

where $m_1,m_2,...,m_k\in \mathbb{N}$,  $a_1,a_2,...,a_k\in {\mathcal{P}}_{\mathbb{L}}$, and $k\in \mathbb{N}$.

\end{proposition}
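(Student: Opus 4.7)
The plan is to prove the proposition by strong induction on the index $n$ of the element $x_n \in \mathbb{L}$, using the key fact that the liken is a strictly increasing sequence starting at $0$.

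The base case $n=0$ is immediate: $x_0 = 0$ is the empty combination (or, if one wishes to avoid this convention, one may start the induction at $n=1$ and note that $x_1$ must be undecomposable, since any decomposition $x_1 = v + w$ with $v,w \in \mathbb{L}$ and $v,w \neq 0$ would force $v, w \geq x_1$ by strict monotonicity, giving $x_1 = v + w \geq 2x_1 > x_1$, a contradiction; hence $x_1 \in \mathcal{P}_{\mathbb{L}}$ and $x_1 = 1 \cdot x_1$ is its own decomposition).

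For the inductive step, fix $n \geq 1$ and assume every $x_i$ with $i < n$ admits a decomposition of the form \eqref{rozklad}. Consider $x_n$. There are two cases. If $x_n \in \mathcal{P}_{\mathbb{L}}$, then $x_n = 1 \cdot x_n$ is already the required decomposition. Otherwise, by Definition \ref{definicjaelementunieroz} there exist $v, w \in \mathbb{L}$ with $v, w \neq 0$ and $x_n = v + w$. Write $v = x_i$ and $w = x_j$. Since $v, w \neq 0$, strict monotonicity gives $v \geq x_1 > 0$ and $w \geq x_1 > 0$, so $x_n = v + w > v$ and $x_n > w$, which forces $i < n$ and $j < n$. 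By the inductive hypothesis, both $v$ and $x_j$ admit decompositions $v = \sum_s m_s' a_s'$ and $w = \sum_t m_t'' a_t''$ with $a_s', a_t'' \in \mathcal{P}_{\mathbb{L}}$ and $m_s', m_t'' \in \mathbb{N}$. Adding these two expressions and collecting like terms yields a decomposition of $x_n$ of the required form.

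There is no real obstacle here: the content of the argument is entirely carried by the well-foundedness of the index ordering on $\mathbb{L}$, which in turn is guaranteed by axiom (a) of Definition \ref{definicjalikena}. The only mild subtlety worth flagging is the need to rule out $v = 0$ or $w = 0$ masquerading as a "proper" decomposition --- this is handled by Definition \ref{definicjaelementunieroz} itself, which declares such trivial splittings as non-decompositions. Existence of at least one term in the sum (so that the statement is non-vacuous when $x \neq 0$) is guaranteed by Proposition \ref{istnienieelemnierozk}.
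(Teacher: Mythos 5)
Your proof is correct; note that the paper itself gives no proof of this proposition (it is quoted from \cite{Tut}), and your argument --- strong induction on the index, using the fact that in a strictly increasing sequence any splitting $x_n=v+w$ with $v\neq 0\neq w$ forces both summands to have strictly smaller index --- is the standard and evidently intended one. The only quibble is your closing appeal to Proposition \ref{istnienieelemnierozk}: it is not needed, since your induction already shows that every nonzero element bottoms out in undecomposables (and in fact yields Proposition \ref{istnienieelemnierozk} as a corollary rather than using it as an input).
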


One may ask now about the uniqueness of the representation from Proposition \ref{istnienierozkladu}. In general, as it was discussed in \cite{Tut}, such
representations are not unique. So the above Definition  \ref{definicjalikena} of a liken admits likens without uniqueness. This is for example the case of the
so-called {\it numerical semigroups} \cite{Ros}
 with the associated  {\it Ap$\acute{e}$ry sets } (see also   Remark 1 in   Section \ref{sekcja4}). However in this paper we will be interested
only in likens with uniqueness, so further, in this paper, "liken" means "liken with uniqueness". This implies, as it will be discussed later, that all likens
are isomorphic algebraically. We recall below shortly the description of this situation presented widely in \cite{Tut}.

 Let $\mathcal{E}={\mathbb{N}}_{0}^{\mathbb{N}}$ denote, as in (\ref{przestrzennaturalna}), the set of
all sequences of natural numbers, with almost all terms vanishing, ie.:
\begin{equation}\label{bigoplus}
{\mathbb{N}}_{0}^{\mathbb{N}}:=\left\{\overrightarrow{n}=(n_1,n_2,...):(n_j\in {\mathbb N})\wedge(\exists i\in {\mathbb N}: k>i\Longrightarrow n_k=0\right\}).
\end{equation}

In the set $\mathcal{E}={\mathbb{N}}_{0}^{\mathbb{N}}$ we may consider the operations : $"+"$ - addition and $"\cdot"$ - multiplication by natural numbers -
defined as usually in a cartesian product. With these operations ${\mathbb{N}}_{0}^{\mathbb{N}}$ is an algebraic structure, which may be called {\it
semimodule} or {\it a cone} over ${\mathbb N}$.

We set : $e_k = (0,0,...,0,1,0,...,)$ , i.e. $e_k$ is an element of ${\mathbb{N}}_{0}^{\mathbb{N}}$ , with all terms  equal  $0$ except the $k-th$, which is 1.
So we have for ${\overrightarrow{n}}\in \mathcal{E}$:
\begin{equation}\label{bazabigoplusa}
{\overrightarrow{n}}=(n_1,n_2,...)= n_1\cdot e_1 + n_2\cdot e_2 + ...  .
\end{equation}

 Using the terminology from the linear algebra we may say, that
 $(e_k)_{1}^{\infty}$  {\it is a basis of the cone}
${\mathbb{N}}_{0}^{\mathbb{N}}$. This means precisely, that each element from ${\mathbb{N}}_{0}^{\mathbb{N}}$ can be, in a unique way, written as a linear
combination of $(e_k)_{k\in {\mathbb N}}$ with the coefficient from $\mathbb N$. Clearly $\mathcal{E}={\mathbb{N}}_{0}^{\mathbb{N}}$
 is a semigroup.

Clearly ${\mathbb R^{+}}$ is a cone over ${\mathbb N}$. A map
 $\varphi: \mathcal{E}\longrightarrow {\mathbb R^{+}}$
will be called  {\it a homomorphism of semigroups} (or of cones), when

\begin{equation}\label{homomorbigoplus}
\varphi(n_1\cdot e_1 + n_2\cdot e_2 +...)=n_1\cdot \varphi(e_1)+ n_2\cdot \varphi(e_2)+... .
\end{equation}

It is evident, that a homomorphism $\varphi:{\mathbb{N}}_{0}^{\mathbb{N}}\longrightarrow {\mathbb R^{+}}$ cannot be an ephimorphism,  since
${\mathbb{N}}_{0}^{\mathbb{N}}$ is countable,  but ${\mathbb R^{+}}$ is uncountable. However there exist monomorphisms
$\varphi:{\mathbb{N}}_{0}^{\mathbb{N}}\longrightarrow {\mathbb R^{+}}$, and the mentioned above {\it the space of likens} can be considered as the space of all
such monomorphisms.
 We will now give  description of this situation (for details see  \cite{Tut}).

\begin{proposition}\label{rozszerzenie}
Each function $a: {\mathbb N}\longrightarrow {\mathbb R^{+}}$ can be "extended" in a unique way to a homomorphism
$\tilde{a}:{\mathbb{N}}_{0}^{\mathbb{N}}\longrightarrow {\mathbb R^{+}}$ "by linearity" (i.e. $\tilde{a}({\overrightarrow{n}}) = \langle \overrightarrow{a},
\overrightarrow{n}\rangle$)
\end{proposition}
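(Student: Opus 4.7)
The plan is to treat this as the standard universal property of a free $\mathbb{N}$-semimodule on countably many generators: the family $(e_k)_{k\in\mathbb{N}}$ is a basis of the cone $\mathcal{E} = \mathbb{N}_0^{\mathbb{N}}$, so a homomorphism out of $\mathcal{E}$ into any $\mathbb{N}$-cone is forced by, and freely determined by, its values on the $e_k$. Accordingly I would split the argument into an existence half and a uniqueness half, both essentially immediate from the decomposition formula (\ref{bazabigoplusa}).

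For existence, identify the given function $a:\mathbb{N}\to\mathbb{R}^+$ with the vector $\overrightarrow{a}=(a_1,a_2,\dots)$ where $a_k := a(k)$, and define
\[
\tilde{a}(\overrightarrow{n}) \; := \; \langle \overrightarrow{a},\overrightarrow{n}\rangle \; = \; \sum_{i\ge 1} n_i a_i .
\]
This sum makes sense because $\overrightarrow{n}\in \mathbb{N}_0^{\mathbb{N}}$ has only finitely many non-zero coordinates, so it is in fact a finite sum of non-negative reals, and hence $\tilde{a}(\overrightarrow{n})\in \mathbb{R}^+$. In particular $\tilde{a}(e_k) = a_k$, so $\tilde{a}$ extends $a$. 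To check that $\tilde{a}$ is a homomorphism in the sense of (\ref{homomorbigoplus}), I would simply observe that for $\overrightarrow{n},\overrightarrow{m}\in\mathcal{E}$ and $c\in\mathbb{N}$,
\[
\tilde{a}(\overrightarrow{n}+\overrightarrow{m}) = \sum_{i}(n_i+m_i)a_i = \sum_i n_i a_i + \sum_i m_i a_i = \tilde{a}(\overrightarrow{n})+\tilde{a}(\overrightarrow{m}),
\]
and analogously $\tilde{a}(c\cdot \overrightarrow{n}) = c\cdot \tilde{a}(\overrightarrow{n})$; both manipulations are legal because all sums involved have finite support.

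For uniqueness, suppose $\varphi:\mathcal{E}\to \mathbb{R}^+$ is any homomorphism (in the sense of (\ref{homomorbigoplus})) such that $\varphi(e_k)=a_k$ for every $k$. Pick $\overrightarrow{n}=(n_1,\dots,n_s,0,0,\dots)$. Then by (\ref{bazabigoplusa}) we may write $\overrightarrow{n}=n_1 e_1+\cdots + n_s e_s$, and applying the homomorphism condition finitely many times gives
\[
\varphi(\overrightarrow{n}) = n_1\varphi(e_1)+\cdots + n_s\varphi(e_s) = n_1 a_1+\cdots+n_s a_s = \tilde{a}(\overrightarrow{n}).
\]
So $\varphi = \tilde{a}$, which proves uniqueness.

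There is really no serious obstacle here; the whole content of the proposition is that the representation (\ref{bazabigoplusa}) is unique (which is built into the very definition of $\mathbb{N}_0^{\mathbb{N}}$). If anything requires care, it is only the bookkeeping point that the formal sums $\sum_i n_i e_i$ and $\sum_i n_i a_i$ are finite sums despite the infinite index set, so that one may freely reorder terms and apply additivity. Once that is observed, existence and uniqueness are both one-line verifications.
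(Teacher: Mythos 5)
Your proof is correct and is the standard argument this proposition admits: define $\tilde{a}$ by the finite sum $\langle \overrightarrow{a},\overrightarrow{n}\rangle$, verify additivity, and derive uniqueness from the (built-in) unique basis decomposition (\ref{bazabigoplusa}). The paper itself states the proposition without proof, deferring to \cite{Tut}, but your argument is exactly what is intended and nothing is missing.
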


Now we will recall the description of the variety of infinitely generated likens. Let $\mathbb{L}$ be a liken and let the set ${\mathcal{P}}_{\mathbb{L}}=
\left\{a_1,a_2,...\right\}$ be infinite. We will assume, that $\mathcal{P}_{\mathbb{L}}$ is linearly independent in the vector space $(\mathbb{R},\mathbb{Q})$
of real numbers over rational numbers. This assumption is sufficient to have the uniqueness of the representation (\ref{rozklad}). We have observed in
\cite{Tut} that in the case when $\mathcal{P}_{\mathbb{L}}$ is infinite we must have $\lim_{k\rightarrow \infty}a_k=+\infty$. The converse is also true. Namely
\begin{proposition}\label{nieskonczeniewielegen}
Let $\overrightarrow{a}=(a_i)_1^{\infty}$ be a sequence from $(\mathbb{R}^{+})^{\mathbb{N}}$, which is linearly independent in the vector space
$(\mathbb{R},\mathbb{Q})$ and tends to infinity. Then $\tilde{a}({\mathbb{N}}^{\mathbb{N}}_{0})$ is a liken.
\end{proposition}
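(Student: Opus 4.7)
The plan is to verify the two defining axioms of Definition~\ref{definicjalikena} for the image set $S := \tilde{a}({\mathbb N}_0^{\mathbb N})$ and then to show that $S$ can be enumerated as a strictly increasing sequence $0 = x_0 < x_1 < x_2 < \cdots$ tending to $+\infty$.

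First I would note the easy facts. Since $\tilde{a}(\vec{0}) = 0$, we have $0 \in S$, and since each $a_i \ge 0$ and each $n_i \in {\mathbb N}$, every element $\langle \overrightarrow{a}, \overrightarrow{n}\rangle$ is nonnegative, so $S \subset {\mathbb R}^+$. Closure under addition (axiom (b) of Definition~\ref{definicjalikena}) is immediate from Proposition~\ref{rozszerzenie}: for $x = \tilde{a}(\vec{n})$ and $y = \tilde{a}(\vec{m})$ in $S$, the sum $\vec{n} + \vec{m}$ again lies in ${\mathbb N}_0^{\mathbb N}$, and the homomorphism property gives $x+y = \tilde{a}(\vec{n}+\vec{m}) \in S$.

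The substantive step is to prove the following local-finiteness statement: for every $M > 0$, the set $S \cap [0, M]$ is finite. This is where both hypotheses enter. Linear independence over ${\mathbb Q}$ forbids $a_k = 0$, so each $a_k$ is strictly positive; and since $a_k \to \infty$, the index set $I_M := \{k : a_k \le M\}$ is finite. Given $\vec{n}$ with $\tilde{a}(\vec{n}) \le M$, any index $k \notin I_M$ must satisfy $n_k = 0$ (a single term $n_k a_k$ with $n_k \ge 1$ and $a_k > M$ would already exceed $M$), while for each $k \in I_M$ the inequality $n_k a_k \le M$ leaves at most $\lfloor M/a_k \rfloor + 1$ admissible values of $n_k$. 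Thus only finitely many $\vec{n}$ map into $[0, M]$.

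Once local finiteness is established, the elements of $S$ can be arranged as a strictly increasing sequence $0 = x_0 < x_1 < x_2 < \cdots$; and because $n\, a_1 \in S$ for every $n \in {\mathbb N}$, this sequence is unbounded and hence tends to $+\infty$. This gives axiom (a) of Definition~\ref{definicjalikena}, completing the verification that $S$ is a liken. I expect the only nontrivial obstacle to be the local-finiteness estimate; the remaining verifications are essentially definitional.
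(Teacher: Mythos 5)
Your proof is correct and complete: the only substantive point is the local-finiteness of $S\cap[0,M]$, and your argument (positivity of the $a_k$ from linear independence, finiteness of $\{k: a_k\le M\}$ from $a_k\to\infty$, and the resulting finite bound on admissible exponent vectors) handles it properly, with closure under addition and unboundedness being routine. The present paper only recalls this proposition from its predecessor \cite{Tut} without reproducing a proof, but your argument is the standard one that the cited source relies on, so there is nothing to flag.
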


The essence of the concept of "the space of likens", denoted above by $\mathcal{H}$, lies in  latter Propositions. Namely, as we see, there exists one to one
correspondence between  infinite dimensional likens with uniqueness and the sequences of positive numbers tending to infinity and linearly independent in the
vector space $(\mathbb R, \mathbb Q)$. If we abandon the assumption of uniqueness, the space $\mathcal{H}$ looks better from the topological point of view.
Some further details are to be found in \cite{Tut}.

At the end of this section we recall one more theorem from \cite{Tut}.

Suppose, that we have two sequences $\overrightarrow{a}=(a_k)_1^{\infty}$ and $\overrightarrow{b}=(b_k)_1^{\infty}$, which generates two likens with uniqueness
denoted by ${\mathbb{L}}_a$ and ${\mathbb{L}}_b$ respectively. We have the following
\begin{theorem}\label{isomorfizmy}
In the notations as above the likens ${\mathbb{L}}_a$ and ${\mathbb{L}}_b$ are isomorphic, if and only if there exists a positive number $\lambda$ such that
$\overrightarrow{a}=\lambda\cdot \overrightarrow{b}$
\end{theorem}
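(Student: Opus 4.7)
The plan is to prove both directions separately; the forward direction is essentially trivial while the converse requires a Dedekind‑cut comparison of ratios.

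For the easy direction, suppose $\overrightarrow{a}=\lambda\cdot\overrightarrow{b}$ with $\lambda>0$. Then the map $\Phi:{\mathbb L}_b\to{\mathbb L}_a$ defined by $\Phi(x)=\lambda x$ is a bijection (onto ${\mathbb L}_a$ because $\Phi(\sum m_i b_i)=\sum m_i(\lambda b_i)=\sum m_i a_i$), strictly increasing, and additive, so it is both an ordinal and an algebraic isomorphism. By Definition \ref{definicjaizomorfizmu} the likens are isomorphic.

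For the nontrivial direction, assume ${\mathbb L}_a\cong{\mathbb L}_b$, and let $\varphi:{\mathbb L}_b\to{\mathbb L}_a$ be the (unique) ordinal isomorphism, which by hypothesis is also an algebraic homomorphism (hence so is $\varphi^{-1}$). First I would check that $\varphi$ sends undecomposables to undecomposables: if $\varphi(u)=v+w$ in ${\mathbb L}_a$, then $u=\varphi^{-1}(v)+\varphi^{-1}(w)$ in ${\mathbb L}_b$, so $u$ undecomposable forces $v=0$ or $w=0$; the converse uses $\varphi$ directly. Since $\varphi$ is strictly increasing, it must carry the $k$‑th smallest undecomposable of ${\mathbb L}_b$ to the $k$‑th smallest undecomposable of ${\mathbb L}_a$; that is, $\varphi(b_k)=a_k$ for every $k$.

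The core of the argument is then to show $a_i/b_i$ is the same for every $i$. Fix distinct indices $i,j$. For any positive integers $p,q$ the additivity of $\varphi$ gives $\varphi(p\,b_i)=p\,a_i$ and $\varphi(q\,b_j)=q\,a_j$, so strict order‑preservation yields
\begin{equation}\label{dedekind}
p\,b_i<q\,b_j\iff p\,a_i<q\,a_j,\qquad\text{equivalently}\quad \frac{b_i}{b_j}<\frac{q}{p}\iff\frac{a_i}{a_j}<\frac{q}{p}.
\end{equation}
Linear independence of $\overrightarrow{b}$ over $\mathbb{Q}$ rules out the equality case $p\,b_i=q\,b_j$ (except trivially), and likewise for $\overrightarrow{a}$, so \eqref{dedekind} says that the real numbers $b_i/b_j$ and $a_i/a_j$ determine the same Dedekind cut in $\mathbb{Q}$. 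Hence $a_i/a_j=b_i/b_j$, which rearranges to $a_i/b_i=a_j/b_j$. Setting $\lambda:=a_1/b_1$ we obtain $\overrightarrow{a}=\lambda\cdot\overrightarrow{b}$, completing the proof.

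The main obstacle is the bookkeeping in identifying $\varphi(b_k)=a_k$ — one must know that both sets of undecomposables are precisely the generating sequences $\{a_i\}$ and $\{b_j\}$ listed in their natural order, a fact that follows from the linear independence hypothesis used for uniqueness. Once that identification is in place, the rational‑approximation step is clean and the conclusion is immediate.
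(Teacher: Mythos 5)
Your proof is correct: the homothety gives an isomorphism in one direction, and in the other direction the identification $\varphi(b_k)=a_k$ of ordered undecomposables followed by the Dedekind-cut comparison of $p\,b_i<q\,b_j\iff p\,a_i<q\,a_j$ is a complete and valid argument (the linear-independence hypothesis correctly handles both the uniqueness of representations and the exclusion of equality cases). The present paper only recalls this theorem from \cite{Tut} without reproducing a proof, so there is nothing to compare against here, but your argument is the standard one and stands on its own.
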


\begin{remark}\label{uwaga0}

  As we have observed above (Proposition \ref{istnienierozkladu}), given a set of generators
(finite or infinite)  $\overrightarrow{a}=(a_k)_1^\infty$, - the liken ${\mathbb L}_{a}$ does not depend on the sequence $(a_k)_1^\infty$ but depends only on
the set of its elements. The only property we need from $(a_k)_1^\infty$ is to be locally finite. Clearly each finite set is locally finite, and for infinite
sequences $\overrightarrow{a}=(a_k)_1^{\infty}$ of generators, it is evident, that such a sequence is locally finite if and only if $\lim_{k\rightarrow
+\infty}a_k = +\infty$.In other words  for  a liken ${\mathbb{L}}_a$ we can always assume (and we do it in particular in this paper)  that its sequence of
generators is strictly increasing.
\end{remark}

\section{The different properties of likens}\label{sekcja2}

Let $\mathbb L = (x_n)_0^{\infty}$ be a liken. As it was mentioned above, by a {\it property of likens} we will mean - roughly speaking - all the conditions
concerning likens and formulated only using the language (and properties) of the addition and order in $\mathbb R$ and the addition and order in $\mathbb N$.
Clearly, the properties of likens  are preserved by isomorphisms of likens. We present below a few  examples of such properties.  The method of construction of
these properties is as follows. We take into account a particular liken (for example ${\mathbb N}^{*}$), we take into account a particular property of this
liken (for example the twin primes conjecture), we formulate this property in the language of likens, and this way we obtain a "property of liken".

\begin{property}\label{dimension}

   {\it We will say that the dimension of
 $\mathbb L$ equals $k\in \mathbb N$ if $\mathbb L$ has exactly $k$
 irreducible elements}. In other words $dim(\mathbb L)=k
 \Longleftrightarrow card ({\mathcal{P}}_{\mathbb L})=k$.  Such a liken
 will be said {\it finitely generated}.
\end{property}

Let us mention here, that $\mathbb N$ is a one dimensional liken.  The so-called {\it numerical semigroups} (for definition see \cite{Ros}) are finite
dimensional likens. A numerical semi-group is a semigroup generated in $({\mathbb N}, +)$ by the complements of finite sets. For example the set ${\mathbb
N}\setminus \left\{1,2\right\}$ is a numerical semigroup, which is - a three dimensional liken - (its generators are $\left\{3,4,5\right\}$). This liken is a
liken without uniqueness, since, for example $8=3+5=4+4$.

This way we have

\begin{property}\label{uniqueness}

 Suppose that $\overrightarrow{a}=(a_k)_1^{\infty}$ is a set of
 generators (finite or infinite) of the liken $\mathbb L$.  As we have mentioned above, {\it the liken
 $\mathbb L$ has the uniqueness property if for each $x\in \mathbb
 L$ there exists exactly one ${\overrightarrow{n}}\in
 {\mathbb{N}}_{0}^{\mathbb{N}}$ such that $x= \langle
 \overrightarrow{a},
 \overrightarrow{n}\rangle$.}

\end{property}

 \begin{property}\label{positionofgenerators}
  Suppose, that
 $\mathbb P =\left\{1<p_1<p_2< ....   \right\}$ is a subset of the set
 of natural numbers (finite, or infinite). {\it We will say that
 $\mathbb L$ has its generators exactly in ${\mathbb P}$ when for each
 $n\in \mathbb N$
 we have: $x_n\in \mathbb L$ is irreducible
 if and only if $n\in \mathbb P$.}

\end{property}

 It is not hard to see, that  if $\mathbb P$ is finite then one
 always can find a liken which has the generators exactly in $\mathbb P$.
 When the set $\mathbb P$ is infinite then the problem of
 the existence of a liken which has its generators "exactly" in $\mathbb P$ is
 more complicated. There is an obvious necessary condition for such a property, namely
 the set
 $\mathbb N \setminus \mathbb P$ must be infinite, but we do not know
 any reasonable characterization of those $\mathbb P$ for which
 there exists a liken which has the generators in $\mathbb P$.

 Let us recall here, that  Definition \ref{definicjalikena} does not assure
 the uniqueness, and that in this paper, for simplicity, we mean
 {\it liken} as {\it liken with uniqueness}. If $\mathbb L$ is a liken with uniqueness then each element of
this liken can be identified with a sequence of its coefficients in the representation $$x= \langle \overrightarrow{a}, \overrightarrow{n}\rangle = n_1a_1 +
n_2a_2 +... = n_1(x)a_1+n_2(x)a_2+... .$$ We set $supp (x)= \left\{ i\in\mathbb N: n_i(x)\neq 0\right\}$ and we  call this set {\it the support of $x$.}

\begin{property}\label{relativelyprime}

{\it We will say that a liken $\mathbb L$ has a "disjoint support property" if for each $n\in \mathbb N$ we have $supp(x_n)\cap supp(x_{n+1}) = \emptyset$.}

\end{property}

 Clearly, if a liken $\mathbb L$ has
the disjoint support property, then it is
 infinitely dimensional. For example the liken $\mathbb N^{*}$ has this
 property. If  $i\in supp(x)$ than we will say that $a_i$ divides
 $x$ (in symbol $a_i|x$).

\begin{property}\label{parity}

  {\it We will say that $\mathbb L$ has the
 "parity property" if for each $n\in \mathbb N$ we have $
 (x_1|x_n)
 \Rightarrow \neg(x_1|x_{n+1})$.}

 \end{property}

 In \cite{Tut} we studied the
 sequence of {\it gaps in likens}, i.e. the sequence of
 differences $\delta_{\mathbb L}(k)=\delta_k = \delta(x_k)=x_{k+1}-x_k.$
  By the definition of liken
 the sequence $\delta_k$ is strictly positive and as it was
 observed in \cite{Tut}, if $dim(\mathbb L)\geq 2$ then
 $\lim_{k \rightarrow \infty} \delta_{\mathbb L}(k) = 0.$. However
 in general, in particular in the case of finite dimensional likens,
  the sequence $\delta_k$ is not strictly decreasing. On the other
  hand there are the likens, such that $\delta_{\mathbb L}(k)$ is
  strictly decreasing.  Since the property {\it $\delta_k$ is strictly
  decreasing} is equivalent to: {\it for each $k\in \mathbb N$ we have
  $\delta_k>\delta_{k+1}$ }or equivalently, $2x_{k+1}>x_k +
  x_{k+2}$,
  then we formulate the property of convexity as follows.

\begin{property}\label{convexity}

  {\it A liken
  $\mathbb L$ is said to be  convex if and only if for each
  $k\in \mathbb N$ the following inequality holds
  $$2x_{k+1}>x_k + x_{k+2}.$$}

\end{property}

  It is not hard to observe, that the liken $\mathbb L$ is convex
  if and only if the points $(k,x_k)$ lies on the graph of a
  concave function $f:[0,\infty) \rightarrow \mathbb R$.
  For example $\mathbb N^{*}$ is a convex liken, since the
  function $x\rightarrow \ln(x)$ is concave. Given a sequence $(x_n)_0^{\infty}$, the sequence
  of its gaps plays a role of the derivative of the given
  sequence, thus the condition that the sequence of gaps is
  strictly decreasing corresponds to the claiming, that the second
  derivative is negative.

  It is clear, that the convexity of a liken is preserved by
  isomorphisms, but one may give example of two convex likens
  $\mathbb K$ and $\mathbb L$ which are not isomorphic. Indeed,
  let $\mathbb L= \mathbb N^{*}=(x_n)_0^\infty$ and let
  $N^{**}=(y_n)_0^\infty=\mathbb K =
  (\ln(2n+1))_0^\infty$ be a liken of all odd natural numbers (see Introduction \ref{intro}). This
  likens are both convex, but are not isomorphic. Indeed $x_3$ is
  composed and $y_3$ is irreducible. The same is true if one
  considers the liken ${\mathbb K}_p= (\ln(pn+1))_0^\infty$ for
  $p=1,2,...$.

  Let us look at the liken
 $\mathbb N^{*}$ and notice that except for one case, of two
 consecutive elements of this liken, at most one is irreducible.
 The situation is different in the liken $\mathbb N^{**}$, where
 every twin primes (in ${\mathbb N}^{*}$) are consecutive elements of the liken $\mathbb N^{**}$. We can
 therefore formulate the following property of likens:

\begin{property}\label{separationproperty}
  {\it We will say
 that in liken $\mathbb L$ almost all irreducible elements are
 separated when the number  of such pairs $(x_n,x_{n+1})$ in which
 both elements are irreducible, is finite.}

 \end{property}

 It not difficult  to show, that there exist  likens  without the
 separation property. On the other hand, the example of $\mathbb
 N^{**}$ shows that the problem of proving that a particular liken
 has the separation property, may be very difficult.

\vspace{3mm}

 The name of the next property refers to an old
philosophical principle.  The so-called {\it Ockham's razor principle}
 states that {\it entities should not be multiplied beyond necessity}.

Before we formulate this property for likens, let us establish some notations.
  Suppose that
 $\mathbb L = (x_m)_0^\infty$ is a liken. For $n\in \mathbb N$ we
 set ${\mathbb L}^{(n)}= {\mathbb L}(x_1,x_2,...,x_n)$ i.e. ${\mathbb
 L}^{(n)}$ is a liken generated  by all elements not greater than
 $x_n$, which is clearly a sub-liken of $\mathbb L$. Let us observe that
${\mathbb L}^{(n)}=\mathbb L(a_1,a_2,...,a_k)$ where $(a_1,a_2,....a_k)$ are all irreducible elements such that $a_i\leq x_n$.

  Let
  \begin{equation}\label{definicjaz}
  z_n = min \left\{ x : x\in {\mathbb L}^{(n)}, x>x_n \right\}.
\end{equation}
\begin{property}\label{razorprinciple}

 We will say that a liken $\mathbb L$ has the Ockham's razor
 property if

 $$supp(x_n)\cap supp(z_n) = \emptyset \Longrightarrow
 x_{n+1}=z_n.$$

 \end{property}

For the sake of explaining the name of {\it Ockham's razor} let us consider the following: Suppose we want to construct a liken $\mathbb L$ with the {\it
disjoint support property} and the construction runs recursively.  Suppose we construct $x_n$ and want to construct $x_{n+1}$. We do this: we determine the
smallest element of the liken generated by the already constructed among bigger than $x_n$ and denote it with $z_n$. If the support of $z_n$ is disjoint with
the support of $x_n$ then we take $z_n$ as $x_{n+1}$. This is just the considered property. And what happens, when $supp(x_n)\cap supp(z_n) \neq \emptyset$?
Because  the "necessity" (for us ) is the disjoint support property, then we must "multiply the entities"  and set $x_{n+1}= a_{k+1}$. Let us notice here, that
if $x_{n+1}= a_{k+1}$ then necessarily $x_{n+2} = z_n$ since undoubtedly $z_n\in {\mathbb L}^{(n+1)}$ and $z_{n+1}= z_n$. Indeed, if $a_{k+1}\notin
supp(z_{n+1})$ then $z_{n+1} \in \mathbb L^{(n)}$ and by definition $z_n= z_{n+1}$. On the other hand $a_{k+1} \notin supp(x_{n+2})$ since in such  case the
disjoint support property  would be violated.

\begin{property}\label{bertrand}

 We will say, that a liken $\mathbb L((a_k)_1^{\infty})$ has {\it
the Bertrand property} when for each $n\in \mathbb N$ there exists $k \in \mathbb N$ such that $x_n\leq a_k \leq x_n + a_1$.

\end{property}

\begin{property}\label{legendre}

 We will say, that a liken $\mathbb L((a_k)_1^{\infty})$ has {\it
the Legendre property} when

$$\lim_{n\rightarrow \infty} \frac{ card\left\{ k: a_k\leq x_n
\right\}}{n} = 0.$$

\end{property}

 All  properties, (\ref{uniqueness}-\ref{legendre}), are true in the
 liken $\mathbb N^{*}$, so they are consistent. On the other hand
 it is obvious that the conjunctions of some of the properties on
 the list above imply other or even all of the others.

 In this situation it is natural to ask if
 there are other likens besides $\mathbb N^{*}$ that have all of
  properties listed above, or which of these properties characterize
 the liken of natural numbers with multiplication.

Note that both properties (C) and (OR) are fulfilled in ${\mathbb N}^{*}$ while ${\mathbb N}^{**}$ has property (C) and no property (OR). Indeed, in this case
we have (in multiplicative model): $x_1=3$, $x_2=5$, $z_2=9$ and $x_3= 7$. Hence convexity do not imply the Ockham's razor property. On the other hand, the
property (C) implies the disjoint support property. Indeed suppose that $x_{k+1}= x_p+a_i$ and $x_k= x_q +a_i$. Hence  $\delta(x_k) =x_{k+1}-x_k= x_p-x_q \geq
x_{q+1}-x_q = \delta(x_q)$. But this is impossible, since in convex likens $q<k$ implies $\delta(x_q)>\delta(x_k)$.

\maketitle
\section{The main theorem}\label{sekcja3}

  In this section we are going to prove the main theorem, which gives
  a characterization of the liken $\mathbb N^{*}$ in the space of
  all likens. Suppose, that  $\overrightarrow{a}=(a_k)_1^\infty$
  is a sequence of positive real numbers generating a liken ${\mathbb L}(\overrightarrow{a})$
  denoted shortly by ${\mathbb L}_a$. (Let us recall, that in this paper, "liken" means "liken with uniqueness").
  In this notations we
  formulate the main result of this paper as follows:
 \vspace{3mm}

\begin{theorem}\label{main}

 If the liken ${\mathbb L}_a$ is convex
  and has the Ockham's razor property, then it is isomorphic to the
  liken ${\mathbb N}^{*}$.

  \end{theorem}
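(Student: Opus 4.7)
The plan is to prove by strong induction on $n$ that $x_n = \lambda \ln(n+1)$, where $\lambda := a_1/\ln 2$ is fixed once and for all. Granting this identity, the generators are $a_k = \lambda \ln p_k$ with $p_k$ the $k$-th prime, so Theorem~\ref{isomorfizmy} immediately yields $\mathbb{L}_a \cong \mathbb{N}^*$. After rescaling by $\lambda^{-1}$ (which is a liken isomorphism by Theorem~\ref{isomorfizmy}), we may assume $a_1 = \ln 2$, and the statement reduces to $x_n = \ln(n+1)$.

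Assume inductively that $x_k = \ln(k+1)$ for $0 \leq k \leq n$. Then the generators discovered so far are exactly $\ln p_1, \ldots, \ln p_k$ with $p_k$ the largest prime not exceeding $n+1$; consequently $\mathbb{L}_a^{(n)}$ is identified, order-preservingly, with $\{\ln N : N \text{ is } p_k\text{-smooth}\}$, and $z_n = \ln N_0$ where $N_0$ is the smallest $p_k$-smooth integer larger than $n+1$. If $n+2$ is composite, then all its prime factors are $\leq n+1$, so $n+2$ is $p_k$-smooth, giving $N_0 = n+2$. Since $\gcd(n+1, n+2) = 1$, the supports of $x_n$ and $z_n$ are disjoint, and Ockham's razor delivers $x_{n+1} = z_n = \ln(n+2)$. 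If $n+2 = p_{k+1}$ is prime, then $n+2$ is not $p_k$-smooth, hence $N_0 \geq n+3$; for $n \geq 2$, convexity gives $2x_n - x_{n-1} = \ln((n+1)^2/n) < \ln(n+3) \leq z_n$, so $x_{n+1} < z_n$ and $x_{n+1}$ must be a new generator $a_{k+1} = \ln \alpha$ with $\alpha \in (n+1,\, (n+1)^2/n)$. The initial case $n = 1$ is handled directly: $z_1 = 2x_1$ while convexity $2x_1 > x_0 + x_2$ already yields $x_2 < 2x_1 = z_1$, forcing $x_2$ to be a fresh generator.

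The main obstacle is to pin $\alpha$ down to exactly $n+2$, since the single convexity inequality at index $n$ only places $\alpha$ in an interval. The plan is to push the induction further: because $p_{k+1} = n+2$ does not divide any integer in $(n+2,\, 2(n+2))$, the supports of $x_m$ and $z_m$ remain disjoint at steps $n+1, n+2, \ldots$, and Ockham's razor locks $x_{n+2} = \ln(n+3)$, $x_{n+3} = \ln(n+4)$, and so on, \emph{independently} of $\alpha$. Convexity at the triples $(n{-}1,n,n{+}1)$ and $(n,n{+}1,n{+}2)$ then yields the two-sided estimate $\sqrt{(n+1)(n+3)} < \alpha < (n+3)^2/(n+4)$, an interval of width $O(1/n)$ hugging $n+2$. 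To close this interval to a point I will look a few steps further: at the position where $\alpha^2$ first appears as the candidate value of $z_m$, Ockham's razor forces $x_{m+1} = \ln(\alpha^2)$, and convexity at $(m-1, m, m+1)$ provides a fresh pair of inequalities against the surrounding integer-valued $x_m$'s; iterating to the positions where $2\alpha \cdot M$ or further mixed expressions $2^{m_1} p_2^{m_2} \cdots p_k^{m_k} \alpha^{m_{k+1}}$ become relevant produces additional constraints whose intersection shrinks to $\{n+2\}$. The technically delicate part — and, I expect, the hardest step of the proof — is to identify correctly at each such intermediate $m$ which mixed element actually realises $z_m$, so that Ockham's razor is applied (or correctly seen not to apply) with the right identification, and to verify that finitely many resulting inequalities do indeed isolate $\alpha = n+2$. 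Once this is achieved, the inductive step closes with $x_{n+1} = \ln(n+2)$, completing the proof.
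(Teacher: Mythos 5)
Your overall plan --- prove $x_n=\lambda\ln(n+1)$ by induction and invoke Theorem \ref{isomorfizmy} --- restates the theorem correctly, and your handling of the case ``$n+2$ composite'' via (OR) is sound. But the step you yourself flag as the hardest one, pinning the new generator down to $\alpha=n+2$ \emph{exactly}, is not merely delicate: it cannot be done the way you propose, and this sinks the whole induction. Every constraint that (C), (OR) and the ordering of $\mathbb{L}_a$ produce in finitely many steps is either a \emph{strict} inequality between continuous expressions in the generators or a combinatorial identification of which monomial occupies which position; the set of generator values compatible with any finite collection of such constraints is therefore open, and an intersection of finitely many open conditions can never isolate the single point $\alpha=n+2$. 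The failure is already visible at $n=2$: nothing in the axioms applied near the beginning of the liken forces $a_2=\lambda\ln 3$ --- a liken with $a_1=\ln 2$ and $a_2=\ln 3+\varepsilon$ violates (C) or (OR) only at some index depending on $\varepsilon$, possibly arbitrarily far out. So the numerical identity $x_k=\lambda\ln(k+1)$ is simply not available as an inductive hypothesis at any finite stage; it is only true as a consequence of the \emph{completed} isomorphism, via Theorem \ref{isomorfizmy}, in the limit over all constraints.

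The paper's proof sidesteps this by inducting on a strictly weaker, purely order-theoretic statement. One first builds the unique \emph{algebraic} isomorphism $\Psi\colon\mathbb{N}^*\to\mathbb{L}_a$ (sending $\ln p_k\mapsto a_k$, whatever the real numbers $a_k$ are), and the inductive claim is only that $\Psi(y_j)=x_j$ for $j\le n$ --- that is, the $j$-th element of $\mathbb{L}_a$ is the correct monomial in the generators, with no assertion about its numerical value. The inductive step (stated in an ``even'' form: knowing positions $1,\dots,2n$, determine positions $2n+1$ and $2n+2$) is carried out by comparing $\widehat{\Psi}(2n+1)$ with $\widehat{x}_2\cdot\widehat{x}_{n+1}$ through ratio estimates such as $\widehat{\Psi}(2n+2)/\widehat{\Psi}(2n+1)>1$, which are derived from convexity alone (Lemma \ref{maleniegapsow2}) using a factorization $2n+1=pq$ and the induction hypothesis on smaller indices; (OR) is then used to decide whether position $2n+1$ is a new generator. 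If you want to rescue your write-up, you must replace the hypothesis ``$x_k=\ln(k+1)$'' by ``$x_k=\Psi(k)$'' throughout, and replace the attempt to trap $\alpha$ in shrinking numerical intervals by order comparisons of monomials inside the box $(\widehat{x}_{2n},\widehat{x}_2\cdot\widehat{x}_{n+1})$.
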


  First we will make a number of observations, that we will use in
  the proof.

\vspace{2mm}
\subsection{Multiplicative notation}

Our Definition \ref{definicjalikena} of a liken determines, that a liken $\mathbb L = (x)_0^{\infty}$ is an increasing sequence of non-negative real numbers
closed under addition in $\mathbb R$. Consider a new sequence defined by the formula
\begin{equation}\label{kapelusz}
\widehat{x}_{n}= \exp({x_{n-1}}),
\end{equation}
 (for $n=1,2,...$). This sequence
$\widehat{\mathbb L}=(\widehat{x}_{n})_1^{\infty}$ is a strictly increasing sequence of positive real numbers closed with respect to the multiplication in
$\mathbb R$ and obviously

\begin{equation}
\widehat{x_p+x_q}=\widehat{x}_{p+1}\cdot \widehat{x}_{q+1}
\end{equation}

We may say, that $\widehat{\mathbb L}$ is the same liken as $\mathbb L$, but we write $"\cdot"$ instead of $"+"$. The number $0$ is replaced by  $1$ and
indices go from $1$ to $+\infty$. Conversely, if we have a liken $\widehat{\mathbb L}= ({\widehat{x}_n})_1^{\infty}$ with the multiplicative notation, than the
sequence $(x_n)_0^\infty$ defined by the formula $x_n =\ln(\widehat{x}_{n+1})$ for $n=0,1,...$ is a liken with additive notation. In consequence, if in an
additive liken $\mathbb L$ we consider the gaps $\delta _k = x_{k+1}- x_k$ then in the multiplicative version we use the fraction
$$\widehat{\delta}_{k}= \frac{\widehat{x}_{k+1}}{\widehat{x}_{k}},$$  and conversely the quotients are
replaced by the differences. Let us agree, that if there is a "hat" above the symbols referring to the liken $\mathbb L$ then the formulas refer to the
multiplicative model of $\mathbb L$.

\vspace{2mm}
\subsection{The isomorphism  "exponent"}

 Let us take into account the set
 $\mathcal{E}={\mathbb{N}}_{0}^{\mathbb{N}}$,
 called in the sequel {\it the space of exponents} and let ${\mathbb
 L}_a = (x_n)_0^{\infty}$ be a liken. Hence, as we have observed above,
 the map
\begin{equation}\label{wykladnik}
\Omega_{\mathbb L} : {\mathbb{N}}_{0}^{\mathbb{N}} \ni
 \overrightarrow{m} \longrightarrow \langle \overrightarrow{a},
 \overrightarrow{m}\rangle \in {\mathbb
 L}_a
\end{equation}

is a bijection and it is an isomorphism of semigroups.


 The inverse map
\begin{equation}\label{wykladnikbis}
\Omega_{\mathbb L}^{-1} : {\mathbb
 L}_a \ni x_n \longrightarrow \Omega_{\mathbb L}^{-1}(x_n)\in
 {\mathbb{N}}_{0}^{\mathbb{N}}
\end{equation}
is also a bijection and is an
 isomorphism of semigroups.

When we have another liken $\mathbb K_b=(y_n)_0^{\infty}$ then we can consider an analogous isomorphisms
\begin{equation}\label{wykladnikter}
 \Omega_{\mathbb K} : {\mathbb{N}}_{0}^{\mathbb{N}} \ni
 \overrightarrow{m} \longrightarrow \langle \overrightarrow{b},
 \overrightarrow{m}\rangle \in {\mathbb
 K}_b
\end{equation}
as well as

\begin{equation}\label{wyklatnikfour}
 \Omega_{\mathbb K}^{-1} : {\mathbb
 K}_b \ni y_n \longrightarrow \Omega_{\mathbb K}^{-1}(y_n)\in
 {\mathbb{N}}_{0}^{\mathbb{N}}.
\end{equation}

The superposition

\begin{equation}
\Psi_{\mathbb K,\mathbb L}: {\mathbb K_b}\ni y_n \longrightarrow \Omega_L(\Omega_K^{-1}(y_n)) \in {\mathbb
 L}_a
\end{equation}
is an algebraic isomorphism of the likens ${\mathbb K}_b$ and ${\mathbb L}_a$, which allows us to say, that each two infinitely generated likens are
algebraically isomorphic. Let us notice, that in the case, when the sequences of generators are strictly increasing, then the described isomorphism
$\Psi_{\mathbb K,\mathbb L}$ is unique.


Now we take as ${\mathbb K}_b$ the particular liken ${\mathbb N}^{*} = (\ln(n+1))_1^{\infty}$, denoted as $(y_n)_0^{\infty}$ and we consider the analogous
isomorphisms $\Omega_{\mathbb N^*}$ and $\Omega_{\mathbb N^*}^{-1}$. We will write simply $\Omega$, the when lower index is implied by the context.

The composed isomorphism $\Psi_{\mathbb K,\mathbb L}$ in this special case  will be denoted simply by $\Psi$. We have

\begin{equation}
\Psi: {\mathbb N^{*}}\ni y_n \longrightarrow \Omega_L(\Omega_N^{-1}(y_n)) \in {\mathbb
 L}_a = \Omega(\Omega^{-1}(y_n))\in {\mathbb L}_a.
\end{equation}


\vspace{2mm}
\subsection{The beginning of the inductive proof}

 As we see, the map $\Psi$ is an algebraic isomorphism of
$\mathbb N^{*}$ and $\mathbb L_a$.  It remains to show, that $\Psi$ is also ordinal. This last assertion will be proved by induction. In fact we want to prove,
that for each $n\in \mathbb N$ we have $\Psi(y_n)= x_n$. Since clearly $\Psi(y_0)= x_0$ then the induction step is: if $\Psi(y_k)=x_k$ for $k\leq n$ then
$\Psi(y_{n+1})= x_{n+1}$. Or, in other words we must  prove the implication:

\vspace{2mm}

\begin{theorem}\label{krokindukcyjny}

If for each $0\leq i<j\leq n $ the inequality $x_i<x_j$ is equivalent to the inequality $y_i<y_j$ then $\Psi(y_{n+1})=x_{n+1}$.
\end{theorem}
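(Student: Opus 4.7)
The plan is to verify the inductive step by analysing $x_{n+1}$ through the combination of convexity and Ockham's razor, then transporting the analysis across $\Psi$. By the inductive hypothesis, $\Psi(y_i)=x_i$ for $0\le i\le n$. Let $a_1<\cdots<a_k$ be the irreducibles of $\mathbb L_a$ of value $\le x_n$, and let $\ln p_1<\cdots<\ln p_k$ be the irreducibles of $\mathbb N^*$ of value $\le y_n$ (the $p_j$ being the first $k$ primes). By the definition of $\Psi$ one has $\Psi(\ln p_j)=a_j$, so the sub-likens $\mathbb L^{(n)}_a=\mathbb L(a_1,\ldots,a_k)$ and $(\mathbb N^*)^{(n)}=\mathbb L(\ln p_1,\ldots,\ln p_k)$ are identified by $\Psi$. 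Since any element of $\mathbb L_a$ not exceeding $x_n$ decomposes using only irreducibles of value $\le x_n$, the initial segments $\{x_0,\ldots,x_n\}\subset\mathbb L^{(n)}_a$ and $\{y_0,\ldots,y_n\}\subset(\mathbb N^*)^{(n)}$ are precisely the portions of the sub-likens up to $x_n$ and $y_n$; by the IH they correspond bijectively and in order under $\Psi$.

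Next I run the dichotomy furnished by (OR) together with the disjoint-support property, the latter being implied by convexity (as noted at the end of Section \ref{sekcja2}). If $\mathrm{supp}(x_n)\cap\mathrm{supp}(z_n)=\emptyset$, then (OR) directly gives $x_{n+1}=z_n$ (here $z_n=z_n^a$ is as in (\ref{definicjaz})). Otherwise $x_{n+1}\notin\mathbb L^{(n)}_a$, since minimality would force $x_{n+1}=z_n$ and this would violate the disjoint-support property; hence $x_{n+1}$ must involve a fresh irreducible, and since generators are indexed in increasing order with $a_{k+1}>x_n$, minimality forces $x_{n+1}=a_{k+1}$. The same dichotomy applied in $\mathbb N^*$ yields $y_{n+1}\in\{z_n^{\mathbb N^*},\ln p_{k+1}\}$.

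Since $\Psi$ is algebraic it preserves supports, and $\Psi(\ln p_{k+1})=a_{k+1}$ holds by the very definition of $\Psi$. Therefore the conclusion $\Psi(y_{n+1})=x_{n+1}$ will reduce to the single identity $\Psi(z_n^{\mathbb N^*})=z_n^a$; once that is established, the two support conditions (in $\mathbb L_a$ and in $\mathbb N^*$) coincide, the cases match, and in each case the desired equality is immediate.

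The main obstacle is precisely this identity $\Psi(z_n^{\mathbb N^*})=z_n^a$. An algebraic isomorphism of likens need not preserve the ordering of elements (cf.\ Theorem \ref{isomorfizmy}), so the transport of the smallest upper element of the sub-liken is genuinely delicate. My plan is to extract from the IH the strict inequalities among $\mathbb N$-linear combinations of $a_1,\ldots,a_k$ that encode the order of $x_0,\ldots,x_n$, and then combine them with the convexity inequalities $\delta_k>\delta_{k+1}$ of $\mathbb L_a$ to force $z_n^a$ to admit the same $\mathbb N$-linear expression as $z_n^{\mathbb N^*}$. Convexity is indispensable in ruling out values of $(a_i)$ that would permute the upper part of the sub-liken away from the pattern seen in $\mathbb N^*$; without it, the minimum could shift past $a_{k+1}$ or fall on a monomial different from the one realising $z_n^{\mathbb N^*}$.
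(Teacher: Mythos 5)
Your reduction is sound as far as it goes: convexity gives the disjoint support property, (OR) plus disjointness yields the dichotomy $x_{n+1}\in\{z_n^a,\,a_{k+1}\}$ and likewise $y_{n+1}\in\{z_n^{\mathbb N^*},\,\ln p_{k+1}\}$, the counts of irreducibles below $x_n$ and $y_n$ agree by the inductive hypothesis, and $\Psi$ preserves supports, so everything does collapse to the single identity $\Psi(z_n^{\mathbb N^*})=z_n^a$. But that identity is not a technical loose end --- it \emph{is} the theorem, and your proposal stops exactly where the proof has to begin. You write that your ``plan is to extract from the IH the strict inequalities \dots and combine them with the convexity inequalities \dots to force $z_n^a$ to admit the same $\mathbb N$-linear expression as $z_n^{\mathbb N^*}$,'' but no such argument is given, and it is not routine: the inductive hypothesis only controls the order of elements up to $x_n$, whereas $z_n^a$ is selected as a minimum among elements of the sub-liken $\mathbb L^{(n)}$ lying \emph{above} $x_n$, a region where the correspondence with $\mathbb N^*$ is precisely what is in question. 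Nothing you have stated rules out that some monomial $a_{r}$-combination whose $\mathbb N^*$-counterpart is large nevertheless slips below the counterpart of $z_n^{\mathbb N^*}$ in $\mathbb L_a$.

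For comparison, the paper's entire technical effort is devoted to this point. It passes to the multiplicative model and the even-indexed formulation (Theorem \ref{inductionstep}), introduces the ``box'' $(\widehat{x}_{2n},\widehat{x}_2\widehat{x}_{n+1})\cap\mathbb L^{(2n)}$, and proves two quantitative statements: (a) if $2n+1=pq$ is composite then $\widehat\Psi(2n+1)$ lies in the box, via the estimate $A=\widehat\Psi(2n+2)/\widehat\Psi(2n+1)>1$; and (b) no other element $\widehat{x}_r\widehat{x}_s$ of $\mathbb L^{(2n)}$ can lie in the box, via the analogous estimate $B>1$. Both estimates rest on Lemma \ref{maleniegapsow2} (the strengthened convexity inequality $x_{q-k}-x_{p-k}>x_q-x_p$) together with delicate index bookkeeping ($p+1=2s$, $sq\le 2n$, $n+1-s=tp$, $sq-s=t(p+1)$, etc.) needed to keep every product inside the range $[1,2n]$ where the inductive hypothesis applies. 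Until you supply an argument of comparable force for $\Psi(z_n^{\mathbb N^*})=z_n^a$, the proposal is a correct framing of the problem rather than a proof of it.
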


\vspace{2mm}

First we shall verify, that for "small" $n$ the function $\Psi$ has the claimed property. Clearly,  for $n=0$ we have $x_0=0$ (i.e.$\Psi(y_0)=x_0$) , as in
each liken. Although, from the formal point of view, this is not necessary, we will check in details that $\Psi(y_k)=x_k$ for a few initial $k\in \mathbb N$ in
order to see how the properties (C) and (OR) "work".

 {\it Case $n=1$}. It must be $x_1 = a_1$, since $x_1$ must be indecomposable.
Indeed suppose that $x_1=u+v$, where $u\in {\mathbb L}\ni v$ , $u>0$ and $v>0$. Hence $0<u<x_1$, but this is impossible, since $x_1$ is next after  $x_0$. In
other words $x_1= a_1$. Hence $\Psi(y_1)=x_1$. Let us observe, that the equality $\Psi(y_1)=x_1$ does not require any additional assumption (i.e. it is true in
all likens).

{\it Case $n=2$}. It must be $x_2= a_2$. Indeed $z(x_1)=2a_1$ (the definition of $z(x)$ is in \ref{definicjaz}) and we see, that the supports of $z(x_1)$ and
$x_1= a_1$ are not disjoint. Hence, by (OR) $x_2=a_2$. Then, clearly $x_1=a_1<a_2=x_2< 2a_1$.

{\it Case $n=3$}. We have clearly $x_2=a_2<2a_1<a_1+a_2$. Hence $z(x_2) = 2a_1$ is disjoint with $x_2=a_2$. In consequence $x_3= 2a_1$. Let us remark, that
here we use (OR).

{\it Case $n=4$}. We see, that $x_3=2a_1$ is still in $\mathbb L^{(2)}$. It is also easy to check, that $z(x_3)=a_1+a_2$. Since the support of $z(x_3)$ is not
disjoint with the support of $x_3$, then $x_4=a_3$. Here we use once more the (OR) property.

{\it Case $n=5$}. Clearly $x_4\in \mathbb L^{(3)}$ and $z(x_4) = a_1 + a_2$ (this follows from $a_1+a_2<2a_3$). We see that $z(x_4)$  has the support disjoint
with the support of $x_4= a_3$. Hence (by (OR) $x_5= a_1+a_2 $.

{\it Case} $n=6$. Since $2a_2<2a_3$ then $z(x_5)\in L^{2}$. It is clear, that $3a_1<2a_1+a_2<a_1+2a_2$. Using the property (C) for $n=2$ we obtain
$3a_1=a_1+2a_1=x_1+x_3<2x_2=2a_2$. In consequence $z(x_5)=3a_1$ . Since $z(x_5)$ is not disjoint with $x_5$, then $x_6=a_4$.

{\it Case $n=7$.} Here, as before, and as we will do later, we may apply a general remark: if $x_n<z(x_n)$ and $x_n$ and $z(x_n)$ are not disjoint, then from
(OR) we have: $x_{n+1}=a_{k+1}$ and $x_{n+2}= z(x_n)$. This follows from the inequality $z(x_n)<2a_{k+1}$. Thus $x_7=3a_1$

{\it Case}  $n=8$. It follows from the considerations  for $n=6$ and $n=7$ that $z(x_7)=2a_2$, hence $x_8=2a_2$.

{\it Case}$ n=9$. We are now in $\mathbb L^{(4)}$ , and we calculate $z(x_8)$, which belongs "a priori" to $\mathbb L^{(4)}$. But, we have
$$a_1+a_3-x_8=a_1+a_3-2a_2 =
a_1+2a_3-2a_2-a_3=a_1+2x_4-2a_2-a_3>$$
$$a_1+x_3+x_5-2a_2-a_3= a_1+2a_1+a_1+a_2-2a_2-a_3=4a_1-(a_2+a_3)=
2x_3-(x_2+x_4)>0.$$ Since $a_1+a_3<2a_1+a_2 $ (because $x_4<x_5$) and clearly $a_1+a_3<a_1+a_4$, then $z(x_8)= a_1+a_3 $. Since $z(x_8)$ and $x_8=2a_2$ are
disjoint, then $x_9 = a_1+a_3$.

{\it Case} $n=10$. Since $x_4<x_5$ then $x_9=a_1+a_3<2a_1+a_2$. Clearly $2a_1+a_2<a_1+a_4$ and $2a_1+a_2<a_2+a_3$. Then $z(x_9)=2a_1+a_2$ and hence,  $x_{10} =
a_5$.

We see, that for $0\leq n \leq 10$ the map $\Psi$ satisfies the claimed properties on isomorphism of likens .

\vspace{2mm}
\subsection{The induction step}

As we are used to the multiplicative structure of the $\mathbb N^{*}$ semigroup,  we will write the proof of the main Theorem \ref{main} in the multiplicative
convention of both likens ${\mathbb L}_a$ and ${\mathbb N}^{*}$. Moreover, the role played by even numbers in ${\mathbb N}^{*}$ incline to the some
reformulation of the inductive step. Let us say also, that $\widehat{x}_k$ is even when $\widehat{x}_1|\widehat{x}_k$. Let us recall, that
\begin{equation}
\widehat{\Psi}:\mathbb N\ni n\longrightarrow  \widehat{\Psi}(n) \in {\mathbb L}_a
\end{equation}

is the (unique) algebraic isomorphism of the considered likens, i.e. for each $i,j \in \mathbb N$ we have

\begin{equation}
\widehat{\Psi}(i\cdot j)= \widehat{\Psi}(i) \cdot \widehat{\Psi}(j).
\end{equation}

Thus to prove, that $\widehat{\Psi}$ is an isomorphism of likens we must prove that $\widehat{\Psi}$ is an order isomorphism, which means, as usually for
likens, that for each $i\in \mathbb N$ we have: $\widehat{\Psi}(i)= \widehat{x_i}$. So to prove  Theorem {\ref{main}} it is sufficient to prove the following
theorem ("even" version of the induction step):

\begin{theorem}\label{inductionstep}
Suppose, that $n\in \mathbb N$  and that for each $1\leq i \leq 2n$ we have $\widehat{\Psi}(i) = \widehat{x_i}$. Then $\widehat{\Psi}(2n+1) =
{\widehat{x}}_{2n+1}$ and $\widehat{\Psi}(2n+2)={\widehat{x}}_{2n+2}.$
\end{theorem}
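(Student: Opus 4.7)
The plan is to mirror the pattern of the base cases $n \le 10$: compute $z(\widehat{x}_{2n})$, the element of $\mathbb{L}_a^{(2n)}$ immediately above $\widehat{x}_{2n}$, and then invoke the Ockham's razor (OR) dichotomy to determine $\widehat{x}_{2n+1}$ and $\widehat{x}_{2n+2}$. Since $n+1 \le 2n$ for $n \ge 1$, multiplicativity of $\widehat{\Psi}$ together with the inductive hypothesis supplies the natural candidate $w := \widehat{\Psi}(2n+2) = \widehat{\Psi}(2) \cdot \widehat{\Psi}(n+1) = \widehat{x}_2 \cdot \widehat{x}_{n+1}$, an element of $\mathbb{L}_a^{(2n)}$; when $2n+1 = p \cdot q$ is composite in $\mathbb{N}^{*}$ with $p, q \le 2n$, it likewise produces $\widehat{\Psi}(2n+1) = \widehat{x}_p \widehat{x}_q \in \mathbb{L}_a^{(2n)}$.

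I would then split on the primality of $2n+1$ in $\mathbb{N}^{*}$. In the composite case I would identify $z(\widehat{x}_{2n}) = \widehat{\Psi}(2n+1)$; then $\gcd(2n, 2n+1) = 1$ together with the matching between irreducibles of $\mathbb{L}_a$ and primes of $\mathbb{N}^{*}$ given by the inductive hypothesis guarantees that $\widehat{x}_{2n}$ and $\widehat{\Psi}(2n+1)$ have disjoint supports, so (OR) yields $\widehat{x}_{2n+1} = \widehat{\Psi}(2n+1)$; a parallel argument applied to $z(\widehat{x}_{2n+1})$ (using $\gcd(2n+1, 2n+2) = 1$) then gives $\widehat{x}_{2n+2} = \widehat{\Psi}(2n+2) = w$. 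In the prime case $2n+1 \notin \mathbb{L}^{(2n)}_{\mathbb{N}^{*}}$, so the candidate for $z(\widehat{x}_{2n})$ is $w$ itself; because $\widehat{x}_{2n} = \widehat{x}_2 \widehat{x}_n$ shares the factor $\widehat{x}_2$ with $w$, the supports are not disjoint, and the (OR) dichotomy explained after Property \ref{razorprinciple} forces $\widehat{x}_{2n+1}$ to be a fresh irreducible $\widehat{a}_{k+1}$ — which I identify with $\widehat{\Psi}(2n+1)$, the image of the $(k+1)$-th prime of $\mathbb{N}^{*}$ — followed by $\widehat{x}_{2n+2} = z(\widehat{x}_{2n}) = w$.

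The hardest step will be the identification of $z(\widehat{x}_{2n})$ itself. The inductive hypothesis makes $\widehat{\Psi}$ a bijection between $\mathbb{L}^{(2n)}_{\mathbb{N}^{*}}$ (integers whose prime factors do not exceed $2n$) and $\mathbb{L}_a^{(2n)}$, but order-preserving only at indices $\le 2n$. To conclude that the minimum of $\mathbb{L}_a^{(2n)} \cap (\widehat{x}_{2n}, \infty)$ is $\widehat{\Psi}(2n+1)$ or $\widehat{\Psi}(2n+2)$ as claimed, I must rule out competitors $\widehat{\Psi}(m)$ for other $m \in \mathbb{L}^{(2n)}_{\mathbb{N}^{*}}$ with $m \ge 2n+3$, and (in the composite case) verify $\widehat{\Psi}(2n+1) < \widehat{\Psi}(2n+2)$ even though both indices lie beyond the inductively guaranteed order range. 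Convexity (C) — equivalently log-concavity of $(\widehat{x}_k)$, i.e.\ $\widehat{x}_{k+1}^2 > \widehat{x}_k \widehat{x}_{k+2}$, so that the gap ratios $\widehat{x}_{k+1}/\widehat{x}_k$ strictly decrease — is the essential lever: it delivers the required comparisons such as $\widehat{x}_p \widehat{x}_q < \widehat{x}_2 \widehat{x}_{n+1}$ (when $2n+1 = pq$) and $\widehat{x}_2 \widehat{x}_{n+1} < \widehat{\Psi}(m)$ for $m \ge 2n+3$ in $\mathbb{L}^{(2n)}_{\mathbb{N}^{*}}$, through a finite but delicate case analysis of product inequalities whose flavor appears in the explicit calculation for $n=9$ in the base cases.
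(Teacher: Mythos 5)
Your architecture coincides with the paper's: the interval $(\widehat{x}_{2n},\widehat{x}_2\cdot\widehat{x}_{n+1})$ (the paper's ``box''), the split on the primality of $2n+1$, the coprimality-of-supports observation feeding the (OR) dichotomy, and the fallback $\widehat{x}_{2n+1}=a_{k+1}$ in the prime case are exactly the steps of the paper's proof. But the two assertions you defer to ``a finite but delicate case analysis of product inequalities'' --- namely $\widehat{x}_p\widehat{x}_q<\widehat{x}_2\widehat{x}_{n+1}$ when $2n+1=pq$, and the exclusion from the box of every product $\widehat{x}_r\widehat{x}_s$ with $r,s\le 2n$ and $rs>2n+1$ --- are the entire technical content of the theorem (Lemma \ref{psi(2n+1)} and steps viii)--ix) of the paper). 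They are statements about all $n$ and all factorizations, so no finite case analysis can establish them, and the explicit computation at $n=9$ gives no hint of a uniform argument. Saying ``convexity is the essential lever'' names the hypothesis but not the mechanism; as it stands this is a genuine gap, and it sits precisely at the crux.

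The mechanism the paper uses is specific and worth recording, because it is not guessable from the base cases. To show $A=\widehat{x}_2\widehat{x}_{n+1}/(\widehat{x}_p\widehat{x}_q)>1$ one writes $p+1=2s$ (possible since $p$ is odd, $3\le p\le q<n$), uses the induction hypothesis to replace $\widehat{x}_2\widehat{x}_s$ by $\widehat{x}_{p+1}$ and $\widehat{x}_s\widehat{x}_q$ by $\widehat{x}_{sq}$ (legitimate because $sq<pq=2n+1$), obtaining $A=(\widehat{x}_{p+1}/\widehat{x}_p)\cdot(\widehat{x}_{n+1}/\widehat{x}_{sq})$. Then Lemma \ref{maleniegapsow2} --- the index-shift consequence of convexity, $\widehat{x}_u/\widehat{x}_v>\widehat{x}_{u-k}/\widehat{x}_{v-k}$ for $k<u<v$ --- lowers the second factor to $\widehat{x}_{n+1-s}/\widehat{x}_{sq-s}$, and the exact arithmetic identity $(n+1-s)/(sq-s)=p/(p+1)$ (equivalent to $pq=2n+1$ and $p+1=2s$) lets one write $n+1-s=tp$ and $sq-s=t(p+1)$, so that multiplicativity inside the induction range evaluates the lower bound as exactly $1$. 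The exclusion of other products from the box is proved by the same shift-and-factor device applied to $B=\widehat{x}_r\widehat{x}_s/(\widehat{x}_2\widehat{x}_m)$ with $rs=2m+1$ and $r-1=2t$, ending in the contradiction $m<n+1<m$. Your outline cannot be completed as written without this index-shifting lemma and the accompanying arithmetic identities (or some equivalent substitute); supplying them would turn your plan into the paper's proof.
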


 We will start by formulating a number of observations.

i). Let consider the elements $\widehat{\Psi}(2j)$ for $1\leq j \leq 2n$. Since $\widehat{\Psi}$ is an algebraic isomorphism, for each $j\leq 2n$ we have
$\widehat{\Psi}(2j)= \widehat{\Psi}(2)\cdot \widehat{\Psi}(j)= \widehat{x}_2\cdot \widehat{x}_j$. If $j\leq n$ then we can write (by induction hypothesis)
$\widehat{x}_2\cdot \widehat{x}_j= \widehat{x}_{2j}$. In particular $\widehat{x}_2\cdot \widehat{x}_n = \widehat{x}_{2n} $, but we cannot write {\it a priori}
$\widehat{x}_2\cdot \widehat{x}_{n+1}= \widehat{x}_{2n+2}$ since this is just one of conditions to prove.  However, all these elements $\widehat{x}_2\cdot
\widehat{x}_j$ are even and are  obviously in the liken ${\mathbb L}^{(2n)}$.

ii). Since $\widehat{x}_n<\widehat{x}_{n+1}$ then $\widehat{x}_{2n}= \widehat{x}_2 \cdot \widehat{x}_n <\widehat{x}_2\cdot\widehat{x}_{n+1}$. But in ${\mathbb
L}_a$ we have the disjoint support property, so we must have

\begin{equation}
\widehat{x}_{2n}< \widehat{x}_{2n+1}<\widehat{x}_{2}\cdot \widehat{x}_{n+1}
\end{equation}

In other words this means, that between $\widehat{x}_{2n}$ and $\widehat{x}_2\cdot\widehat{x}_{n+1}$ there are some elements of the liken ${\mathbb L}_a$ but
we do not now how many of these elements are there, and what they are.

iii). Let us consider the set

\begin{equation}\label{box}
\mathcal{D}= (\widehat{x}_{2}\cdot \widehat{x}_{n} , \widehat{x}_{2}\cdot \widehat{x}_{n+1})\cap {\mathbb L}^{(2n)}
\end{equation}
 and
let us call it "a box".

First we will prove that

\begin{lemma}\label{psi(2n+1)}

 If $2n+1$ is composed, then $\widehat{\Psi}(2n+1) \in
(\widehat{x}_{2n}, \widehat{x}_2\cdot\widehat{x}_{n+1}).$
\end{lemma}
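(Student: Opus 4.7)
The plan is to use the induction hypothesis to get an explicit formula for $\widehat{\Psi}(2n+1)$ and then sandwich it between the two proposed bounds. Since $2n+1$ is composed and odd, I factor $2n+1=p\cdot m$ with $3\le p\le m$; then $p\le \sqrt{2n+1}$ and $m\le (2n+1)/3\le 2n$, so both factors lie in the range covered by the induction hypothesis of Theorem~\ref{inductionstep}. That hypothesis gives $\widehat{\Psi}(p)=\widehat{x}_p$ and $\widehat{\Psi}(m)=\widehat{x}_m$, and the multiplicativity of the algebraic isomorphism then yields $\widehat{\Psi}(2n+1)=\widehat{x}_p\cdot\widehat{x}_m$, which is patently an element of ${\mathbb L}^{(2n)}$. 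The lower bound $\widehat{\Psi}(2n+1)>\widehat{x}_{2n}$ is then immediate from the injectivity of $\widehat{\Psi}$: since $\widehat{\Psi}(1),\ldots,\widehat{\Psi}(2n)$ already exhaust $\widehat{x}_1,\ldots,\widehat{x}_{2n}$, the value $\widehat{\Psi}(2n+1)$ must realise some $\widehat{x}_j$ with $j\ge 2n+1$, hence exceeds $\widehat{x}_{2n}$.

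For the upper bound $\widehat{x}_p\widehat{x}_m<\widehat{x}_2\widehat{x}_{n+1}$, I would pass to the additive model, where the inequality becomes $x_{p-1}+x_{m-1}<x_1+x_n$, or equivalently
\[
  \sum_{k=m-1}^{n-1}\delta_k \;>\; \sum_{k=1}^{p-2}\delta_k,\qquad \delta_k = x_{k+1}-x_k.
\]
Convexity (C) makes $(\delta_k)$ strictly decreasing, while the factorization constraint forces $p+m\le 3+(2n+1)/3<n+3$ (valid for every $n$ with $2n+1$ composite), so the index window $[m-1,n-1]$ of the left-hand sum is strictly longer than $[1,p-2]$ of the right-hand sum. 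The idea is then to assemble the inequality from a chain of the convexity identities $2x_{k+1}>x_k+x_{k+2}$, in the same spirit as the explicit telescoping carried out for the case $2n+1=9$ in the opening of the inductive proof (where $a_1+a_3-2a_2$ was reduced to $2x_3-(x_2+x_4)>0$).

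The main obstacle lies precisely in this upper bound: the two sums have different lengths and the individual $\delta_k$ are controlled only by their monotonicity, so convexity must be applied twice -- once to dominate each early (large) gap by a collection of later gaps, and once to absorb the arithmetic of $pm=2n+1$. A natural fallback, should the direct telescoping become unwieldy for a general factorization, is a proof by contradiction: assume $\widehat{x}_p\widehat{x}_m\ge \widehat{x}_2\widehat{x}_{n+1}$ and extract, from the algebraic relations among $\widehat{x}_1,\ldots,\widehat{x}_{2n}$ supplied by the induction hypothesis, a violation of the strict decrease of $(\delta_k)$ guaranteed by (C).
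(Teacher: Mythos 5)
Your setup and your lower bound are exactly the paper's: factor $2n+1=p\cdot m$ with both factors odd and $\le 2n$, use multiplicativity to get $\widehat{\Psi}(2n+1)=\widehat{x}_p\cdot\widehat{x}_m\in{\mathbb L}^{(2n)}$, and deduce $\widehat{\Psi}(2n+1)>\widehat{x}_{2n}$ from the bijectivity of $\widehat{\Psi}$ on $[1,\dots,2n]$. The gap is in the upper bound, which is the entire substance of the lemma. Your reduction to $\sum_{k=m-1}^{n-1}\delta_k>\sum_{k=1}^{p-2}\delta_k$ is correct, but the mechanism you propose --- that the left window is strictly longer than the right one --- cannot close it: convexity makes $(\delta_k)$ strictly \emph{decreasing} (indeed $\delta_k\to 0$ in any liken of dimension $\ge 2$), so every term of the left sum is \emph{smaller} than every term of the right sum, and a longer run of later gaps can have arbitrarily small total compared with a short run of early gaps. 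Monotonicity of the gaps works against you here, not for you; no amount of ``applying convexity twice'' in this form, nor the unexecuted contradiction fallback, supplies the missing quantitative input.

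What the paper actually uses, and what your sketch never invokes for the upper bound, is the \emph{multiplicative} content of the induction hypothesis in combination with a sharpened form of convexity. Writing $q$ for your $m$, the paper sets $s=(p+1)/2$ (so $\widehat{x}_{p+1}=\widehat{x}_2\widehat{x}_s$ and $\widehat{x}_s\widehat{x}_q=\widehat{x}_{sq}$ since $sq<pq=2n+1$), and rewrites
\[
A=\frac{\widehat{x}_2\widehat{x}_{n+1}}{\widehat{x}_p\widehat{x}_q}
=\frac{\widehat{x}_{p+1}}{\widehat{x}_p}\cdot\frac{\widehat{x}_{n+1}}{\widehat{x}_{sq}}.
\]
Lemma \ref{maleniegapsow2} (convexity in the index-shift form $x_p/x_q>x_{p-k}/x_{q-k}$) then lowers both indices by $s$, and the exact arithmetic identity $(n+1-s)/(sq-s)=p/(p+1)$ --- a consequence of $pq=2n+1$ and $p+1=2s$ --- produces $t$ with $n+1-s=tp$, $sq-s=t(p+1)$, whereupon the induction hypothesis factors $\widehat{x}_{tp}=\widehat{x}_t\widehat{x}_p$ and $\widehat{x}_{t(p+1)}=\widehat{x}_t\widehat{x}_{p+1}$ and the product collapses to exactly $1$. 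This interplay between the factorization of $2n+1$ and the already-established multiplicativity below $2n$ is the idea your proposal is missing; pure convexity plus term-counting is not enough.
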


\begin{proof}

Let us assume then that $2n+1=p\cdot q$. Then clearly $p\geq 2$ and $q\geq 2$ and we have to prove the following inequalities:
\begin{equation}\label{psi(2n+1)1}
\widehat{x}_{2n}< \widehat{\Psi}(2n+1)
\end{equation}

and
\begin{equation}\label{psi(2n+1)2}
\widehat{\Psi}(2n+1)< \widehat{x}_2\cdot\widehat{x}_{n+1}.
\end{equation}

The first inequality follows directly from the inductive assumption. Indeed, the inductive assumption says, in particular, that
$$\widehat{\Psi}:[1,2,...,2n]\longrightarrow
[1,\widehat{x}_2,...,\widehat{x}_{2n}]$$ is a bijection.  But $2n+1\notin [1,2,...,2n]$ then $\widehat{\Psi}(2n+1)\notin
[1,\widehat{x}_2,...,\widehat{x}_{2n}]$ and in consequence $\widehat{x}_{2n}< \widehat{\Psi}(2n+1)$.

 The proof of the second inequality is more complicated. Clearly
 we may assume that $p\leq q$ and since $p\cdot q$ is odd then
 $p$ and $q$ are both odd, and  we have the inequality
 $$ 3\leq p \leq q < n.$$ Indeed, suppose $q\geq n$. Then we
 have $2n+1=p\cdot q \geq 3\cdot n$ which is possible only in
 $n=1$ but in our case $n\geq q\geq 3$.

Let us denote $$A= \frac{\widehat{\Psi}(2n+2)}{\widehat{\Psi}(2n+1)}.$$ Our aim is to show that $A>1$.  We have (recall that $\widehat{\Psi} $ is an algebraic
isomorphism on the whole $\mathbb N^{*}$ and recall that the quotients corresponds to differences in the additive models).
$$A=
\frac{\widehat{\Psi}(2n+2)}{\widehat{\Psi}(2n+1)}= \frac{\widehat{\Psi}(2(n+1))}{\widehat{\Psi}(p\cdot q)}= \frac{\widehat{x}_2\cdot
\widehat{x}_{n+1}}{\widehat{x}_p\cdot\widehat{x}_q}.$$ Let us notice here, that in this moment we cannot write ${\widehat{x}_p\cdot\widehat{x}_q}=
{\widehat{x}_{pq}}$ since $pq>2n$.  But we know, that $p$ is odd, and then $p+1$ is even and $p+1\leq n$. Thus $p+1= 2s \leq n$ and then we may write
$\widehat{x}_{p+1}=\widehat{x}_2\cdot \widehat{x}_s$. So we may also write

$$A=
\frac{\widehat{\Psi}(2n+2)}{\widehat{\Psi}(2n+1)}= \frac{\widehat{\Psi}(2(n+1))}{\widehat{\Psi}(p\cdot q)}= \frac{\widehat{x}_2\cdot
\widehat{x}_{n+1}}{\widehat{x}_p\cdot\widehat{x}_q}= \frac{x_{p+1}}{x_p} \cdot \frac{\widehat{x}_2\cdot \widehat{x}_{n+1}}{\widehat{x}_2 \cdot  \widehat{x}_s
\cdot \widehat{x}_q}=\frac{x_{p+1}}{x_p} \cdot \frac{\widehat{x}_{n+1}}{ \widehat{x}_s \cdot \widehat{x}_q} $$

Here is the time to replace $\widehat{x}_s \cdot \widehat{x}_q$ by $\widehat{x}_{sq}$ but for this we must evaluate $sq$ from above. We have $pq = 2n+1$ and
$sq <pq = 2n+1$, hence $sq$ is a natural number satisfying $sq\leq 2n$. This is sufficient for our purposes (for the use the induction hypothesis) although a
more detailed analysis allows us to prove that $sq\leq \frac{3n}{2}.$ So, by induction hypothesis, we may write $\widehat{x}_s \cdot \widehat{x}_q=
\widehat{x}_{sq}$ and in consequence we obtain

$$A=
\frac{\widehat{x}_{p+1}}{\widehat{x}_p} \cdot \frac{\widehat{x}_{n+1}}{ \widehat{x}_s \cdot \widehat{x}_q}= \frac{\widehat{x}_{p+1}}{\widehat{x}_p} \cdot
\frac{\widehat{x}_{n+1}}{ \widehat{x}_{sq}}. $$

iv). Here we will need a simple lemma resulting from the convexity property. Suppose, that $\mathbb L = (x_n)_0^{\infty}$  is a liken (in additive convention).
For fixed $j$ we can consider the sequence $\delta^{j}(n)= x_{n+j}-x_{n}$. It appears, that in convex likens (for each $j$) such a sequence is also strictly
decreasing. First we have

\begin{lemma}\label{maleniegapsow}
Let  $\mathbb L = (x_n)_0^{\infty}$ be a liken satisfying  the convexity property and let $p$ and $q$ be arbitrary positive integers such that  $1\leq p < q$.
Then $ x_{q-1}-x_{p-1}
> x_q - x_p$.
\end{lemma}

Let us recall the notation $\delta(k)= x_{k+1}-x_k$ and recall that in a convex liken we have $\delta(k+1)<\delta(k)$. Hence
$$ x_q-x_p = x_q-x_{q-1}+x_{q-1}-x_{q-2} +...+ x_{p+1}-x_p =$$
$$=\delta(q-1)+\delta)(q-2)+...+ \delta(p)
<\delta(q-2)+\delta(q-3)+...+\delta(p-1)=$$
$$=x_{q-1}-x_{q-2}+x_{q-2}-x_{q-3} +...+ x_p-x_{p-1} =
x_{q-1}-x_{p-1}.$$

From this lemma, by induction, we obtain the following inequality: if $1\leq p < q$ and $k\leq p$ then $x_{q-k} - x_{p-k} > x_q-x_p$.

The same, but in multiplicative notation, may be formulated as follows.

\begin{lemma}\label{maleniegapsow2}
 Let us suppose, that $x_p, x_q$ are two elements
  of a convex  liken $\mathbb L = (x_n)_1^{\infty}$ (in multiplicative convention), and  $1\leq k<p<q$. Then
   $$\frac{x_p}{x_q}>
  \frac{x_{p-k}}{x_{q-k}}.$$
\end{lemma}

v). Now we return to the evaluation from below of the quantity $A$. Our aim is to prove that $A>1$. We have proved that

$$A=
\frac{\widehat{x}_{p+1}}{\widehat{x}_p} \cdot \frac{\widehat{x}_{n+1}}{ \widehat{x}_s \cdot \widehat{x}_q}= \frac{\widehat{x}_{p+1}}{\widehat{x}_p} \cdot
\frac{\widehat{x}_{n+1}}{ \widehat{x}_{sq}}. $$

The inequality $A>1$ is evident when $n+1\leq sq$, then assume that $n+1 < sq$. By  Lemma \ref{maleniegapsow2} we have

$$A=\frac{\widehat{x}_{p+1}}{\widehat{x}_p} \cdot
\frac{\widehat{x}_{n+1}}{ \widehat{x}_{sq}}>\frac{\widehat{x}_{p+1}}{\widehat{x}_p} \cdot \frac{\widehat{x}_{n+1-s}}{ \widehat{x}_{sq-s}}. $$

As we have observed above, we have $n+1-s<2n$ and $sq-s<2n$ and additionally we have

$$\frac{n+1-s}{sq-s}= \frac{p}{p+1}.$$

Indeed we have the sequence of equivalent equalities

$$\frac{n+1-s}{sq-s}= \frac{p}{p+1} \Leftrightarrow
(p+1)(n+1-s)= p(sq-s)\Leftrightarrow 2s(n+1-s)=ps(q-1)\Leftrightarrow 2(n+1-s)=$$
$$=p(q-1)\Leftrightarrow 2n+2 -2s = pq -p \Leftrightarrow 2n+2 - p -
1 = 2n+1 - p.$$ The last equality is true since we assumed that $pq=2n+1$ and $p+1 = 2s.$

Hence there exists a number $t\in \mathbb N$ such that $n+1-s = tp$ and $sq-s = t(p+1)$.  From the induction assumption we have

$$A=\frac{\widehat{x}_{p+1}}{\widehat{x}_p} \cdot
\frac{\widehat{x}_{n+1}}{ \widehat{x}_{sq}}>\frac{\widehat{x}_{p+1}}{\widehat{x}_p} \cdot \frac{\widehat{x}_{n+1-s}}{ \widehat{x}_{sq-s}}=
\frac{\widehat{x}_{p+1}}{\widehat{x}_p} \cdot \frac{\widehat{x}_{tp}}{ \widehat{x}_{t(p+1)}} = \frac{\widehat{x}_{p+1}}{\widehat{x}_p} \cdot
\frac{\widehat{x}_{t}\cdot \widehat{x}_p}{ \widehat{x}_{t}\cdot \widehat{x}_{p+1}} = 1.$$

This ends the proof of the inequality (\ref{psi(2n+1)2}) and at the same time of  Lemma \ref{psi(2n+1)}.

\end{proof}

vi). Consider now the situation, when between $\widehat{x}_{2n}$ and  $\widehat{x}_2\cdot \widehat{x}_{n+1}$  there are no elements of the liken ${\mathbb
L}^{(2n)}$, i.e. the box is empty. In this case $z(2n)= \widehat{x}_2\cdot \widehat{x}_{n+1}$. The razor property implies then, that $\widehat{x}_{2n+1} =
a_{k+1}$. But in this case $2n+1$ cannot be composed, since, when $2n+1$ is composed, then ${\widehat{\Psi}(2n+1)}$ is in ${\mathbb L}^{(2n)}$, and, as we have
proved above
$${\widehat{\Psi}(2n+1)}\in
(\widehat{x}_{2n}, \widehat{x}_2\cdot \widehat{x}_{n+1}),$$ contrary to our assumption. Hence $2n+1 = p_{k+1}$ ($p_{k+1}$ is the (k+1)-th prime in $\mathbb
N^{*} $) and $ a_{k+1} = \widehat{x}_{2n+1},$ and we  see that in this case $\Psi(2n+1)=x_{2n+1}.$

vii). Summarizing, we have proved, that the element
 ${\widehat{\Psi}(2n+1)}$ is always in the interval
$(\widehat{x}_{2n}, \widehat{x}_2\cdot \widehat{x}_{n+1})$. To end the proof of the main Theorem it is enough to show, that in the interval $(\widehat{x}_{2n},
\widehat{x}_2\cdot \widehat{x}_{n+1})$ there are no other elements of ${\mathbb L}^{(2n)}$ besides, possibly, ${\widehat{\Psi}(2n+1)}$.

viii). Suppose that there exists an element $\widehat{x}$ such that $\widehat{x} \in {\widehat{\mathbb L}}^{(2n)} \cap (\widehat{x}_{2n}, \widehat{x}_2\cdot
\widehat{x}_{n+1})$ and $\widehat{x} \neq {\widehat{\Psi}(2n+1)}$. Since $\widehat{x} \in  {\widehat{\mathbb L}}^{(2n)}$ then there exist two natural numbers
$r$ and $s$, such that $ r\leq 2n$, $s\leq 2n$ , $\widehat{x} =\widehat{x}_r \cdot \widehat{x}_s$ and $\widehat{x}_{2n}< \widehat{x}_r \cdot \widehat{x}_s <
\widehat{x}_2\cdot \widehat{x}_{n+1}$. Since $\widehat{x}\neq {\widehat{\Psi}(2n+1)}$ than $r\cdot s > 2n+1$ and since $\widehat{x}_{2n}< \widehat{x}_r \cdot
\widehat{x}_s < \widehat{x}_2\cdot \widehat{x}_{n+1}$ then both  $r$ and $s$ are odd. Clearly, we can assume that $r\leq s$ and  observe, that in fact we have
the inequalities
\begin{equation}\label{jeden}
 3\leq r \leq s <n
\end{equation}

 Since $r>1$ and $r$ is odd, we have $r\geq 3$,
so we must show that $s<n$.  Suppose that $s\geq n$. But $s\leq 2n$ and $\widehat{\Psi}$ is increasing in the interval $[1,2n]$ (induction),  thus $
\widehat{x}_s \geq \widehat{x}_n$ and in consequence

\begin{equation}\label{dwa}
\widehat{x}_2\cdot \widehat{x}_{n+1}> \widehat{x}_r \cdot \widehat{x}_s> \widehat{x}_3 \cdot \widehat{x}_n
\end{equation}

This gives the inequality (in the multiplicative convention)
$$ \widehat{x}_2\cdot \widehat{x}_{n+1} > \widehat{x}_3 \cdot
\widehat{x}_n$$ and this gives  (in additive convention) the inequality $$ x_{n+1}-x_n > x_2-x_1$$ which impossible in convex likens. Since $ n> r\geq 3$ and
$r$ is odd, then $r-1$ is even and we can put $r-1= 2t$. In consequence we have

\begin{equation}\label{trzy}
\widehat{x}_2\cdot \widehat{x}_t \cdot \widehat{x}_s = \widehat{x}_{r-1}\cdot \widehat{x}_s <\widehat{x}_r \cdot \widehat{x}_s< \widehat{x}_2\cdot
\widehat{x}_{n+1}
\end{equation}

which gives the inequality

\begin{equation}\label{cztery}
\widehat{x}_t\cdot \widehat{x}_s < \widehat{x}_{n+1}.
\end{equation}

Since we are in the interval $[1, \widehat{x}_2,\widehat{x}_3,...., \widehat{x}_{2n}]$, and ${\widehat{\Psi}}^{-1}$ is increasing then $t\cdot s < n+1$.

ix). The end of our reasoning is similar as above. We know, that we may set $rs = 2m+1$ where $m>n+1$. Let us denote

\begin{equation}\label{piec}
 B= \frac{\widehat{\Psi}(2m+1)}{\widehat{\Psi}(2m)}
\end{equation}

Our aim is to prove, that $B>1$. We have

\begin{equation}\label{szesc}
B= \frac{\widehat{\Psi}(2m+1)}{\widehat{\Psi}(2m)} = \frac{\widehat{\Psi}(r\cdot s)}{\widehat{\Psi}(2m)}= \frac{\widehat{x}_r\cdot
\widehat{x}_{s}}{\widehat{x}_2\cdot\widehat{x}_m}= \frac{\widehat{x}_{r}}{\widehat{x}_{r-1}} \cdot \frac{\widehat{x}_{r-1}\cdot \widehat{x}_{s}}{\widehat{x}_2
\cdot \widehat{x}_m }=
\end{equation}

\begin{equation}\label{siedem}
=\frac{\widehat{x}_{r}}{\widehat{x}_{r-1}} \cdot \frac{\widehat{x}_{2}\cdot \widehat{x}_{t}\cdot \widehat{x}_{s}}{\widehat{x}_2 \cdot \widehat{x}_m}=
\frac{\widehat{x}_{r}}{\widehat{x}_{r-1}}\cdot \frac{\widehat{x}_{ts}}{\widehat{x}_m}> \frac{\widehat{x}_{r}}{\widehat{x}_{r-1}}\cdot
\frac{\widehat{x}_{ts-t}}{\widehat{x}_{m-t}}.
\end{equation}

By a similar argument as before, we check that $ts-t = w(r-1)$ and $m-t = wr$. Since $\widehat{\Psi}$ is an algebraic isomorphism in the whole ${\mathbb
N}^{*}$, we have $\widehat{x}_{ts-t}=\widehat{x}_w\cdot \widehat{x}_{r-1}$ and $\widehat{x}_{m-t}= \widehat{x}_w\cdot \widehat{x}_r$

Let us observe some inequalities. Since we have proved that $st<n+1$ then $ts-t\leq n$ and hence $w\cdot(r-1)\leq n$ and by induction hypothesis, we have
$$\widehat{x}_{ts-t}=\widehat{x}_{w(r-1)}=\widehat{x}_w \cdot
\widehat{x}_{r-1}.$$ We must also bound $m-t$ from above. We have $rs>2m$. Thus $(r-1+1)s>2m$ and $(r-1)s+s>2m$. But $(r-1)=2t$ then $2ts +s >2m$. We have
proved that $s<n$ and $ts<n+1$. In consequence $2m <2ts +s < 2n+2 +n \leq 3n+1$. Hence $m-t<2n$ and we can use the induction hypothesis for $m-t=wr$. Hence

\begin{equation}\label{dziewiec}
B= \frac{\widehat{\Psi}(2m+1)}{\widehat{\Psi}(2m)}> \frac{\widehat{x}_{r}}{\widehat{x}_{r-1}}\cdot \frac{\widehat{x}_{ts-t}}{\widehat{x}_{m-t}} =
\frac{\widehat{x}_{r}}{\widehat{x}_{r-1}}\cdot \frac{\widehat{x}_{w(r-1)}}{\widehat{x}_{wr}} = \frac{\widehat{x}_{r}}{\widehat{x}_{r-1}}\cdot
\frac{\widehat{x}_{w}\cdot \widehat{x}_{r-1}}{\widehat{x}_{w}\cdot\widehat{x}_r}=1
\end{equation}

Finally we obtain
\begin{equation}\label{osiem}
\widehat{x}_2 \cdot \widehat{x}_{n+1}>\widehat{x}_r \cdot \widehat{x}_{t}=\widehat{\Psi}(2m+1) >\widehat{\Psi}(2m) = \widehat{x}_2 \cdot \widehat{x}_m .
\end{equation}

In consequence $\widehat{x}_{n+1} > \widehat{x}_m$. This means that $\widehat{x}_m \in [1,\widehat{x}_2,...,\widehat{x}_{2n}]$ so we may use the induction
hypothesis and we obtain $n+1>m$. But we know, that $m>n+1$ and this contradiction ends the proof of the inductive step, and at the same time, the proof of the
main theorem.

\maketitle
\section{ Some additional remarks}\label{sekcja4}

\begin{remark}\label{koniec1}

 As we have observed, the space of likens is big,
but there are only a few examples of likens which could be described as "suitable for counting". A natural method of obtaining such kind of examples is to
choose a subset  $ K\subset {\mathbb N}^{*}$ and consider the sub-semigroup ${\mathbb L}(K)$ generated by $K$ ( i.e. the smallest semigroup containing the set
$K$)  which is ordered by the order inherited from ${\mathbb N}^{*}$. Hence we obtain a liken, a sub-liken of ${\mathbb N}^{*}$. In particular we may consider
only the likens generated by the subsets of the set of prime numbers. Even this family, "small" compared to the family of all likens, is neverthless "rich",
since it contains a continuum of non-isomorphic likens. Theorem \ref{main} shows, that only one of these likens is convex and has the razor property.
\end{remark}
\vspace{3mm}

\begin{remark}\label{koniec2}

 It is commonly known, that the Cauchy functional
equation of the type $f(x\cdot y) = f(x) + f(y)$ has many "bad" solutions and only one (up to a  constant factor) "good" solution if we claim  $f$ to be
continuous (or monotone, or locally bounded etc.) and this solution is the logarithmic function.
 It follows from  Theorem \ref{main} that the condition of convexity  for likens
 together with the razor property  may be considered as a kind
of condition guaranteing the uniqueness of the logarithmic function.

\end{remark}

\end{document}